\numberwithin{equation}{section} {\theorembodyfont{\upshape}
\newtheorem{remark}{\textbf{\emph{Remark}}}[section]
\newtheorem{mytheorem}{Theorem}[section]

\theoremstyle{plain}\newtheorem{myprop}[mytheorem]{Proposition}
\theoremstyle{plain} 
\theoremstyle{plain}
\numberwithin{equation}{section}
{\theorembodyfont{\upshape}\theoremstyle{plain}}
\newenvironment{proof}{\noindent{\em Proof.}}{\quad \hfill{\footnotesize $\Box$}\vspace{1ex}}

\newcommand{\myend}{\hfill{\quad \hfill{\footnotesize $\Box$}\vspace{0.1ex}}}

\newcommand{\RNum}[1]{\uppercase\expandafter{\romannumeral #1\relax}}

\usepackage[left=3cm,right=2.5cm]{geometry}

\begin{document}

\title{Condensed Generalized Finite Element Method (CGFEM)}
\author{Qinghui Zhang \thanks{Corresponding author, Guangdong Province Key Laboratory of Computational Science and School of Data and Computer Science, Sun Yat-Sen University, Guangzhou, P. R. China. Email address: zhangqh6@mail.sysu.edu.cn. This research was partially supported by the Natural Science Foundation of China under grants 11471343 and 11628104, and Guangdong Provincial Natural Science Foundation of China under grant 2015A030306016.}
\and Cu Cui \thanks{School of Mathematics, Sun Yat-Sen University, Guangzhou, P. R. China. Email address: cuic3@mail2.sysu.edu.cn.}
}

\date{}

\maketitle

\begin{abstract}
Generalized or extended finite element methods (GFEM/XFEM) are in general badly conditioned and have numerous additional degrees of freedom (DOF) compared with the FEM because of introduction of enriched functions. In this paper, we develop an approach to establish a subspace of a conventional GFEM/XFEM approximation space using partition of unity (PU) techniques and local least square procedures. The proposed GFEM is referred to as condensed GFEM (CGFEM), which (i) possesses as many DOFs as the preliminary FEM, (ii) enjoys similar approximation properties with the GFEM/XFEM, and (iii) is well-conditioned in a sense that its conditioning is of the same order as that of the FEM. The fundamental approximation properties of CGFEM is proven mathematically. The CGFEM is applied to a problem of high order polynomial approximations and a Poisson crack problem; optimal convergence orders of the former are proven rigorously. The numerical experiments and comparisons with the conventional GFEM/XFEM and FEM are made to verify the theory and effectiveness of CGFEM.
\end{abstract}

\noindent {\bf Keywords:} GFEM/XFEM, PU, condensed procedure, conditioning, higher order approximation, crack

\section{Introduction}

Generalized or extended finite element methods (GFEM/XFEM) augment the standard finite element methods (FEM) with special functions that locally
mimic unknown solutions of underlying problems. Meshes in the GFEM/XFEM are typically simple, fixed, and independent of non-smooth features of problems, and thus the heavy burden of re-meshing and mesh refinement in the FEM is alleviated essentially. The GFEM/XFEM has been extensively applied to a wide range of engineering problems, e.g., crack propagation, interface, multiphase flows, fluid-structure interaction, large deformation. We refer to review articles \cite{BaBanOs,BaBanOs1,BelGra,Efe,FrBel} and references therein for various aspects of GFEM/XFEM. The GFEM/XFEM can be viewed as an instance of the partition of unity methods (PUM) \cite{BaMe,MelBa1}. In the rest of paper, we will use the term GFEM instead of GFEM/XFEM.

It was realized early that the GFEM is generally badly conditioned. This is mainly caused by almost linear dependence between the FE shape functions and added special functions. The bad conditioning in the GFEM may cause disastrous round-off errors in elimination methods or slow convergence of iterative schemes in solving underlying linear systems. Many interesting ideas have been proposed to improve the conditioning of GFEM, such as, changing the standard linear FE PU functions to so-called flat-top PU functions \cite{GriSch,HLi,ZhaBaBan}, preconditioning the stiffness matrices or orthogonalization \cite{AgaBor,AgaCha,BeMoBu,MenBor,LanMak,Sch}, and correcting the enrichments by interpolant \cite{BaBan1,BabBanKen,ZhaBaBan,ZhaBaBan2,ZhaBaBan1,GuDuBaBan,SanDu}.

In addition to the bad conditioning, the introduction of special functions in the GFEM causes complex approximation spaces that are composed of the FE spaces and enriched spaces. Correspondingly, the stiffness matrix of GFEM consists of the FE part and enriched part also. Such a structure makes many standard operations in the FEM, for instance, time integrations, mass lumping, complicated in the GFEM, see \cite{FrZil,WenTian,MenRet} for instance. Therefore, many studies have been devoted to reducing DOFs of the GFEM. It is quite natural to believe that the reduction of DOFs can also decrease the almost linear dependence of shape functions, and thus improve the conditioning of GFEM. A DOF-gathering GFEM was proposed in \cite{LabJul,NicRenCha,Zhang0} for the crack problems, where numerous singular enriched DOFs are gathered to several using cut-off functions. The optimal approximations of DOF-gathering GFEM were attained \cite{LabJul,NicRenCha}, and the well conditioning was addressed in \cite{Zhang0}. The PU technique can also be a powerful approach to reduce the DOFs of GFEM. We sketch this idea briefly. For each PU function, the local enriched functions are re-combined according to interpolations or least square approximations such that the new enriched functions are characterized by the function values at nodes. Then, the global shape functions are established by collecting the local enriched functions based on the functions values at nodes. The PU ideas to reduce the DOFs were applied to develop the intrinsic GFEM (PUM) \cite{FrBelInt,FrBelIntPU,Fries}, the extra-DOF-free GFEM \cite{Tian}, and so on. We mention that these ideas were also employed in PUM-RBF methods \cite{MolFer,Wen1}, meshless methods \cite{FleChu,NguRab}, and so on, to reduce DOFs or improve the conditioning. The shape functions of the intrinsic GFEM are constructed using moving least-square procedures, which cause the well-known numerical integration issue as in the meshless methods \cite{BBOZ}. The extra-DOF-free GFEM employs a selectively interpolating least-square approach, which poses a strong constraint when the linear functions serve as the enrichments. Moreover, there is no theoretical analysis on all the above-mentioned DOF-reduced GFEMs. Theoretical understanding is important to design more efficient DOF-reduced GFEMs.

In this paper, we propose a condensed GFEM (CGFEM) where the shape functions of conventional GFEM are condensed using the PU techniques. The approximation space of CGFEM is a subspace of the preliminary GFEM space. The CGFEM possesses the same number of DOFs as that of the FEM and enjoy the approximation advantages of GFEM at the same time. The local approximation functions are constructed by the least-square scheme instead of the moving least-square method so that the difficulty of numerical integrations in the meshless methods does not exist in the CGFEM. The fundamental approximation properties of CGFEM are proven mathematically, which are almost same as those of the conventional GFEM. We find out from the approximation properties that the regularity of shape functions plays an important role in the CGFEM. The scaled condition number of CGFEM is numerically shown to be of the same order as that of the FEM. The CGFEM is applied to a problem of high order polynomial approximations and a Poisson crack problem successfully. The regularity results and optimal convergence orders are proven rigorously for the high order polynomial approximations. The numerical experiments and comparisons with the conventional GFEM and FEM are made to verify the theory and effectiveness of CGFEM. In addition, as in the FEM, the shape functions of CGFEM are characterized by nodes in that a shape function is associated with a node. Therefore, many standard operations in the FEM, such as the time integrations and mass lumping, can be applied to the CGFEM more directly. This is another potential advantage of CGFEM over the conventional GFEM, which will be investigated in forthcoming studies.

The paper is organized as follows. The model problem is described in Section 2, where the conventional GFEM and stable GFEM are reviewed as well. The new CGFEM is proposed in Section 3, and the fundamental approximation properties of CGFEM are proven. In Section 4, the CGFEM is applied to the high order polynomial approximations and a Poisson crack problem, the regularities and optimal convergence orders are proven for the high order polynomial approximations. The numerical experiments and comparisons with the conventional GFEM, SGFEM, and FEM are made in Section 5. The concluding remarks are presented in Section 6.

\section{Model problem, GFEM, and SGFEM}\label{sec2}

For a domain $D$ in ${\mathbb R}^d$, an integer $m$, and $1 \leq q \leq \infty$,  we denote the usual Sobolev spaces by $W^{m,q}(D)$ with
norm $\|\cdot\|_{W^{m,q}(D)}$ and semi-norm $|\cdot|_{W^{m,q}(D)}$. The space $W^{m,q}(D)$ will be represented by $H^m(D)$ in case $q=2$ and $L^q(D)$ when $m=0$, respectively.

Let $\Omega\subset \mathbb R^d$ be a bounded, simply connected, convex domain with piecewise smooth boundary $\partial \Omega$. We consider an elliptic variational problem as follows:
\begin{eqnarray}
\mbox{Find } u \in {\cal E}(\Omega)\subset H^1(\Omega) \mbox{ such that } B(u,v) = L(v), \quad \forall \ v \in {\cal E}(\Omega), \label{varprob}
\end{eqnarray}
where ${\cal E}(\Omega)$ is an energy space defined by
\[{\cal E}(\Omega):=\{v\in H^1(\Omega): B(v,v)< \infty\}\]
with an energy norm $\|v\|_{{\cal E}(\Omega)}:=\sqrt{B(v,v)}$, $B(\cdot,\cdot)$ is the usual energy inner product satisfying
\[
C_1|v|_{H^1(\Omega)}^2 \le B(v,v) \le C_2 |v|_{H^1(\Omega)}^2, \quad  \forall \ v \in {\cal E}(\Omega)
\]
where $C_1,\, C_2$ are fixed positive constants, and $L(\cdot)$ is a bounded linear functional on $H^1(\Omega)$. (\ref{varprob}) could be the general variational formulations for many elliptical problems, such as Poisson equations, elasticity equations. To highlight the main ideas, we consider the Neumann boundary conditions in this paper. Impositions of the essential boundary conditions will be commented in the next section.

Let ${\cal S}_h\subset{\cal E}(\Omega)$ be a finitely dimensional approximate subspace, and a discretized variational problem to (\ref{varprob}) based on ${\cal S}_h$ is posed by
\begin{eqnarray}
\mbox{Find } u_h \in {\cal S}_h \mbox{ such that } B(u_h,v_h) = L(v_h), \quad \forall \ v_h \in {\cal S}_h. \label{dvarprob}
\end{eqnarray}
It is easy to know from the Cea's Lemma that
\begin{equation}\label{Cea}
\|u-u_h\|_{{\cal E}(\Omega)}\leq \min_{v_h\in {\cal S}_h}\|u-v_h\|_{{\cal E}(\Omega)}.
\end{equation}
The result (\ref{Cea}) points out that an approximation space ${\cal S}_h$ with good approximation properties will produce a good approximation solution $u_h$ from (\ref{dvarprob}).

\medskip

\noindent\textbf{GFEM, and Flat-Top PU GFEM:}

\medskip

To describe the GFEM, we consider a standard FE mesh on $\Omega$, for a given mesh-size parameter $0<h<1$. Let $\{e_s:\,s\in E_h\}$ be the set of elements with an index set $E_h$, and every $e_s$ is closed. The set of nodes is denoted by $\{x_i \, : \,i \in I_h\}$ with the index set of nodes, $I_h$. We assume that the mesh is quasi-uniform and independent of non-smoothness in exact solutions. Let $N_i$ be the standard FE hat function associated with the nodes $x_i$, $i \in I_h$, with closed supports $\omega_i$. The closed sets, $\omega_i, i\in I_h$, are called patches. Since the mesh is quasi-uniform, we have
\begin{equation}
\quad \|N_i\|_{L^\infty(\Omega)} \le C_1,\quad \|\nabla N_i\|_{L^\infty(\Omega)} \le C_2 h^{-1} \label{Shacond}
\end{equation}
for all $i \in I_h$, where $C_1,\, C_2$ are generic constants independent of $i$ and $h$. It is well known that $ N_i,\, i \in I_h$ form a PU, i.e.,
\[
\sum_{i \in I_h} N_i(x) \equiv 1, \quad \forall \  \ x\in \Omega.
\]
It is well known that the standard FEM produces poor accuracy when the exact solutions are not smooth, e.g., singular or discontinuous.

The GFEM \cite{BaBanOs,BaBanOs1} is a Galerkin method with special approximation spaces ${\cal S}_h$, which are obtained by augmenting the FE spaces with the special functions that locally ``mimic'' unknown solutions of underlying problems. The GFEM can be viewed as a particular instance of partition of unity methods (PUM) \cite{BaMe,MelBa1}, where local approximation functions are coupled using the partition of unity (PU) functions. On each patch $\omega_i$, we consider an $n_i$-dimensional local approximation subspace
\[
V_i=\mbox{span}\{\xi_i^{[j]}\}_{j=1}^{n_i},
\]
where the functions $\xi_i^{[j]}$, called enrichments, are carefully chosen to ``mimic'' the exact solution $u$, locally in $\omega_i$. The approximation subspace of GFEM is as follows:
\begin{equation}
{\cal S}_h= {\cal S}_G = \mbox{span}\{N_i\xi_i^{[j]}:\,i\in I_h,\,\,j=1,\cdots,n_i\}. \label{GFEMspace}
\end{equation}
It is obvious that the GFEM has more DOFs since the additional special functions in $V_i$ are introduced. These special functions and the FEM functions may be almost linearly (or linearly) dependent so that stiffness matrices of GFEM may be badly conditioned or singular even. For instance, in one-dimensional case, if we choose $V_i=\mbox{span}\{1, x-x_i\}$ (linear polynomials), then the stiffness matrix associated with (\ref{GFEMspace}) is singular, see \cite{BaBanOs1} in detail.

An approach to improve the conditioning is to change the FE PU $\{N_i, i\in I_h\}$ to a so-called flat-top PU $\{Q_i^\sigma,\, i\in I_h\}$. We describe the flat-top PU as follows. For every $i\in I_h$, let $\omega_i^\sigma$ be a subset of $\omega_i$, i.e., $\omega_i^\sigma\subset\omega_i$, and there is a constant $\sigma$ independent of $i, h$ such that
\[\mbox{diam}(\omega_i^\sigma) \geq\sigma h,\,\,\forall\,\,i\in I_h,\]
and $\omega_i^\sigma\cap\omega_j^\sigma=\emptyset$ for $i\neq j$. Each PU function $Q_i^\sigma$ is associated with $x_i$, which is of compact support and satisfies
\[Q_i^\sigma(x)=1,\,\,x\in \omega_i^\sigma.\]
namely, the value of $Q_i^\sigma$ in $\omega_i^\sigma$, a neighborhood of $x_i$, is 1. Such a PU is known as the flat-top PU (FT-PU) in the literature. The FT-PU was first used in the PUM in \cite{GriSch}. They were also used in the context of superconvergence study of GFEM in \cite{BaBanOs}. It can be obtained for the FT-PU that
\begin{equation}
\mbox{supp}\{Q_i^\sigma\} \subset \omega_i, \quad \|Q_i^\sigma\|_{L^\infty(\Omega)} \le C_1,\quad \|\nabla Q_i^\sigma\|_{L^\infty(\Omega)} \le C_2 h^{-1}.\label{secondPUcond}
\end{equation}
The approximation space of the FT-PU GFEM is
\begin{equation}
{\cal S}_h= \mbox{span}\{Q_i^\sigma\xi_i^{[j]}:\,i\in I_h,\,\,j=1,\cdots,n_i\}. \label{GFEMspaceFT}
\end{equation}
The FT-PU reduces the linear dependence of shape functions of GFEM. The condition numbers of FT-PU GFEM with polynomial enrichments are proven to be $O(h^{-2})$ in \cite{HLi}, which is of same order as in the FEM. A construction of the FT-PU is given in (\ref{PU}) in Appendix.
\medskip

\noindent\textbf{SGFEM:}

\medskip

Recently, a stable GFEM (SGFEM) was proposed to address the ill-conditioning of GFEM. A GFEM is referred to as SGFEM if (a) it maintains approximation properties of GFEM, and (b) its scaled condition number of stiffness matrix is of same order as that of the FEM. Let ${\cal I}_hf$ be the FE interpolant of a function $f$ defined as
\[{\cal I}_hf(x)=\sum_{i\in I_h}f(x_i)N_i(x),\]
then the approximation space of SGFEM is
\begin{equation}
{\cal S}_h= \mbox{span}\{N_i: \,i\in I_h\} \oplus \mbox{span}\{N_i(\xi_i^{[j]}-{\cal I}_h\xi_i^{[j]}):\,i\in I_h,\,\,j=1,\cdots,n_i\}. \label{SGFEMspace0}
\end{equation}
It was reported \cite{ZhaBaBan} that when applied to enrichments of high order polynomial, the SGFEM (\ref{SGFEMspace0}) has a rank deficiency difficulty in that the element stiffness matrices, associated with the enrichments, are singular. This was overcome in \cite{ZhaBaBan} by changing the FE PU to the FT-PU, and the approximation space of the modified SGFEM is given by
\begin{equation}
{\cal S}_h= \mbox{span}\{N_i: \,i\in I_h\} \oplus \mbox{span}\{Q_i^\sigma(\xi_i^{[j]}-{\cal I}_h\xi_i^{[j]}):\,i\in I_h,\,\,j=1,\cdots,n_i\}. \label{SGFEMspace}
\end{equation}
The SGFEM (\ref{SGFEMspace}) with certain modifications has been applied to crack problems \cite{ZhaBaBan1,GuDuBaBan,SanDu,Zhang0}, interface problems \cite{ZhaBaBan2,BabBanKen}, and high order approximations \cite{ZhaBaBan}, and so on.

\medskip

\noindent\textbf{Fundamental approximation features of GFEM and SGFEM:}

\medskip

The approximation features of GFEM (\ref{GFEMspace}), (\ref{GFEMspaceFT}), and SGFEM (\ref{SGFEMspace0}), (\ref{SGFEMspace}), have been developed early. We present them without proofs, which can be found out in \cite{BaBanOs,BaBanOs1,BaMe,MelBa1,BaBan1,ZhaBaBan}.
\begin{mytheorem}
Let $u$ be the solution to the variational problem (\ref{varprob}) and $u_h$ be the solution to the discretized variational problem (\ref{dvarprob}) produced by the GFEM (\ref{GFEMspace}), (\ref{GFEMspaceFT}), or SGFEM (\ref{SGFEMspace}). Then there is constant $C$ independent of $i$ and $h$ such that for arbitrary $\eta_i\in V_i$, $i\in I_h$,
\begin{equation}\label{FAM}\|u-u_h\|_{{\cal E}(\Omega)}\leq C\Bigg(\sum_{i\in I_i}\Big[\|u-\eta_i\|_{L^2(\omega_i)}^2h^{-2}+|u-\eta_i|_{H^1(\omega_i)}^2\Big]\Bigg)^{1/2}.\end{equation}
\end{mytheorem}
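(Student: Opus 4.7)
The plan is to invoke Cea's lemma (\ref{Cea}) and, in each of the three cases, construct an explicit $v_h\in\mathcal{S}_h$ from the given local enrichments $\eta_i\in V_i$ for which $\|u-v_h\|_{\mathcal{E}(\Omega)}$ is bounded by the right-hand side of (\ref{FAM}); energy-norm equivalence with $|\cdot|_{H^1(\Omega)}$ then converts every estimate into one for $\|u-u_h\|_{\mathcal{E}(\Omega)}$.

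For the two GFEM spaces (\ref{GFEMspace}) and (\ref{GFEMspaceFT}), the natural choice is the PU-glued function
\[v_h=\sum_{i\in I_h}P_i\,\eta_i,\]
where $P_i$ stands for $N_i$ or $Q_i^{\sigma}$, respectively; this lies in $\mathcal{S}_h$ because each $\eta_i$ is a linear combination of the $\xi_i^{[j]}$. Using the PU property $\sum_i P_i\equiv 1$, I would write $u-v_h=\sum_i P_i(u-\eta_i)$, differentiate via the product rule, and apply the uniform $L^\infty$ bounds on $P_i$ together with the scaling $\|\nabla P_i\|_{L^\infty}\le C h^{-1}$ from (\ref{Shacond})/(\ref{secondPUcond}). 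The bounded-overlap property of the patches $\{\omega_i\}$ (which follows from quasi-uniformity) then lets one exchange the pointwise sum for a sum of squared local norms, producing exactly the quantity on the right-hand side of (\ref{FAM}).

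For the SGFEM space (\ref{SGFEMspace}) the previous $v_h$ is no longer admissible, so I would instead take
\[v_h=\mathcal{I}_hu+\sum_{i\in I_h}Q_i^{\sigma}\,(\eta_i-\mathcal{I}_h\eta_i),\]
which sits in $\mathcal{S}_h$ by construction: the first summand is an FE function and the second lies in the enriched part because $\eta_i$ is a linear combination of the $\xi_i^{[j]}$. Inserting $\sum_i Q_i^\sigma=1$ and using the linearity of $\mathcal{I}_h$ yields the tidy identity
\[u-v_h=\sum_{i\in I_h}Q_i^{\sigma}\bigl[(u-\eta_i)-\mathcal{I}_h(u-\eta_i)\bigr].\]
Applying the standard local FE interpolation estimate $\|f-\mathcal{I}_h f\|_{L^2(\omega_i)}+h\,|f-\mathcal{I}_h f|_{H^1(\omega_i)}\le Ch\,|f|_{H^1(\omega_i)}$ to $f=u-\eta_i$, followed by the same product-rule/bounded-overlap bookkeeping as in the GFEM case, recovers (\ref{FAM}).

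The main technical nuisance is keeping the SGFEM computation local: the FE interpolant $\mathcal{I}_h$ is nominally global, but the compact support $\mathrm{supp}\,Q_i^{\sigma}\subset\omega_i$ restricts the relevant nodal values of $\mathcal{I}_h(u-\eta_i)$ to $\overline{\omega_i}$, so the per-patch estimate does not pick up contributions from outside $\omega_i$ and thus does not inflate. Apart from this, the argument is a careful but routine application of the partition-of-unity identity together with quasi-uniformity; the only real intellectual content is the clever choice of $v_h$ in each of the three cases.
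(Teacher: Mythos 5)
The paper itself states this theorem without proof (it is quoted from \cite{BaBanOs,BaBanOs1,BaMe,MelBa1,BaBan1,ZhaBaBan}), so the relevant comparison is with the standard arguments in those references and with the paper's own analogous proof of Theorem \ref{maintheo}. For the two GFEM spaces (\ref{GFEMspace}) and (\ref{GFEMspaceFT}) your argument is correct and is exactly the classical Babu\v{s}ka--Melenk partition-of-unity estimate combined with Cea's lemma (\ref{Cea}): glue the local $\eta_i$ with the PU, expand $u-v_h=\sum_i P_i(u-\eta_i)$, use the product rule with (\ref{Shacond}) or (\ref{secondPUcond}), and invoke the bounded overlap of the patches; this is the same mechanics the paper reuses in the proof of (\ref{FAM1}), so on this branch there is nothing to object to.

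The SGFEM branch, however, contains a genuine gap. Your choice $v_h=\mathcal{I}_hu+\sum_iQ_i^\sigma(\eta_i-\mathcal{I}_h\eta_i)$ and the telescoped identity $u-v_h=\sum_iQ_i^\sigma\bigl[(u-\eta_i)-\mathcal{I}_h(u-\eta_i)\bigr]$ are the right starting point (and are what the SGFEM literature does), but the estimate you then invoke, $\|f-\mathcal{I}_hf\|_{L^2(\omega_i)}+h\,|f-\mathcal{I}_hf|_{H^1(\omega_i)}\le Ch\,|f|_{H^1(\omega_i)}$ with $f=u-\eta_i$, is not available at this level of generality in $d\ge 2$: the theorem only assumes $u\in{\cal E}(\Omega)\subset H^1(\Omega)$, so the nodal values of $u-\eta_i$ entering $\mathcal{I}_h$ need not even be defined, and even for continuous $H^1$ functions nodal interpolation is not $H^1$-stable. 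For instance, a logarithmic spike $f$ centered at a node $x_i$ with $f(x_i)=1$ can have $h^{-1}\|f\|_{L^2(\omega_i)}+|f|_{H^1(\omega_i)}$ arbitrarily small while $\mathcal{I}_hf$ contains the hat function $N_i$ with coefficient $1$, whose $H^1$ seminorm is of order one; so the interpolation term cannot be absorbed into the right-hand side of (\ref{FAM}) by this route. Making the SGFEM case rigorous requires either extra regularity or pointwise (nodal/$L^\infty$) control of $u-\eta_i$ --- which is precisely why the paper's own CGFEM bound (\ref{FAM1}) retains nodal-value terms $|u(x_l)-\eta_i(x_l)|$ and later kills them with an $L^\infty$ Taylor estimate in Theorem 4.2 --- or the problem-specific treatment of the interpolated enrichments used in \cite{BaBan1,ZhaBaBan}. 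As written, your claim that only ``routine bookkeeping'' remains after the interpolation estimate is where the proof breaks.
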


The error estimate (\ref{FAM}) is referred to as the fundamental approximation feature of GFEM and SGFEM, which makes the GFEM and SGFEM yield accurate approximate solutions $u_h$ by designing certain local approximation spaces $V_i$. This is important for many typical non-smooth problems, e.g., crack problems, interface problems.

It is clear that the major structures of GFEM are to enrich the FEM spaces with additional functions. First, either the extra functions and FE functions or the extra functions themselves may be almost linear dependent. Therefore, the conditioning of GFEM could be really bad. This stimulates a great deal of studies to enhance the conditioning of GFEMs, as reviewed in the section of Introduction. Second, the introduction of extra DOFs in the GFEM makes many standard procedures in the FEM complicated, especially in dynamical problems, such as time integration or mass lumping \cite{FrZil,WenTian,MenRet}. Finally, the FT-PU functions used in the GFEM (\ref{GFEMspaceFT}) are piecewise polynomials on each element rather than the polynomials (see (\ref{PU})). Therefore, numerical integrations for the FT-PU functions are more involved than for the FE functions. These difficulties motivate a study on the condensed GFEM in next section.

\section{Condensed GFEM and its approximation features}\label{sec3}

In this section, we propose a condensed GFEM (CGFEM), where (i) the approximation space is a subspace of the GFEM approximation space (\ref{GFEMspace}), and the number of shape functions is the same as that in the preliminary FEM, (ii) the approximation feature is similar with (\ref{FAM}) of the GFEM and SGFEM, and (iii) it is well-conditioned.

On the whole, we will condense the GFEM space (\ref{GFEMspace}) to get a subspace
\begin{equation}\label{condense0}{\cal S}_h = \mbox{span}\{\psi_l: l\in I_h\},\end{equation}
such that any $u\in {\cal E}(\Omega)$ can be approximated by
\begin{equation}\label{mainmode}
{\cal V}_hu(x)=\sum_{l\in I_h}u(x_l)\psi_l(x)
\end{equation}
with a similar approximation feature with (\ref{FAM}), where $\psi_l(x)$ is associated with $x_l$. Remember that $u$ can be approximated by the standard FE interpolant
\begin{equation}\label{FEMmode}
{\cal I}_hu(x)=\sum_{i\in I_h}u(x_i)N_i(x).
\end{equation}
It can be seen that (\ref{mainmode}) and (\ref{FEMmode}) have similar expression forms. However, we will see that (\ref{mainmode}) is an approximation rather than an interpolant. It is clear that the CGFEM (\ref{condense0}) (that is a subspace of (\ref{GFEMspace})) has as many DOFs as those in the FEM since they both have the same index set, $I_h$.

We briefly describe the idea to get (\ref{mainmode}). Remember that $V_i$'s are the local approximation spaces of GFEM (\ref{GFEMspace}). First, the local enrichments in $V_i$ are re-combined according to a least square procedure such that the new enriched functions are characterized by the function values  $u(x_l)$ of some nodes around $x_i$. Then, the new enriched functions are multiplied by the FE PU function $N_i$. Finally, the shape functions $\psi_l$ are derived by collecting the local enriched functions in terms of $u(x_l)$. The detailed constructions are specified below.

For any $x_i$, $i\in I_h$, let $X_i=\{x_l:\,l\in I_i\}$ be a set of nodes in neighborhood of $x_i$ with an index set $I_i$, and assume that $X_i$ is $V_i$-unisolvent, namely,
\begin{equation}\label{Haar}\forall\,\, \eta_i\in V_i, \,\,\eta_i(x_l)=0, x_l\in X_i\,\,\Rightarrow \,\,\,\eta_i\equiv0.\end{equation}
$X_i$ depends on $V_i$, and will be given later. We also assume that there exists a constant $\kappa$ independent of $i$ and $h$ such that
\begin{equation}\label{finiteover}|X_i|\leq \kappa,\,\,\,\,\forall\,\,\,i\in I_h.\end{equation}
For instance, $X_i$ can be the nodes in patch $\omega_i$ (the support of $N_i$).

We next fit $u(x_l), l\in I_i$ using the approximation space $V_i$ by a standard least square (LS) procedure. Specifically, we find a vector ${\bf b} =(b_1, b_2,\cdots,b_{n_i})^T$ such that a function
\begin{equation}\label{LS}
J_i({\bf b}):=\sum_{l\in I_i}\Big({\bf Q}_i^T(x_l){\bf b}-u(x_l)\Big)^2
\end{equation}
is minimal, where ${\bf Q}_i(x)=\Big(\xi_i^{[1]}(x), \xi_i^{[2]}(x),\cdots,\xi_i^{[n_i]}(x)\Big)^T$. Solving the minimal problem (\ref{LS}), we get a local approximation function $u_i$ to $u$ according to the nodes $x_l, l\in I_i$ as follows:
\begin{equation}\label{LS1}u_i(x)=\sum_{l\in I_i}u(x_l){\bf Q}_i^T(x){\bf G}_i^{-1}{\bf Q}_i(x_l),\end{equation}
where \[\,\,\,{\bf G}_i=\sum_{l\in I_i}{\bf Q}_i(x_l){\bf Q}_i^T(x_l)\]
is invertible because $X_i$ is assumed to be $V_i$-unisolvent. Denote
\begin{equation}\label{LSbasis}\tilde{\xi}_i^l(x) := {\bf Q}_i^T(x){\bf G}_i^{-1}{\bf Q}_i(x_l),\,\,l\in I_i\end{equation}
in (\ref{LS1}), we have
\begin{equation}\label{LS2}u_i(x)=\sum_{l\in I_i}u(x_l)\tilde{\xi}_i^l(x).\end{equation}
We note that the LS procedure leads to a reproducing property of $\tilde{\xi}_i^l(x)$:
\begin{equation}\label{reproducing1}\sum_{l\in I_i}\eta_i(x_l)\tilde{\xi}_i^l(x)=\eta_i(x),\,\,\,\forall\,\,\eta_i(x)\in V_i.\end{equation}
To attain a global approximation ${\cal V}_hu$, we employ the PU $\{N_i\}$ to ``paste'' the local approximations $u_i$ together as follows:
\begin{equation}\label{global1}
{\cal V}_hu(x)=\sum_{i\in I_h}N_i(x)u_i(x)=\sum_{i\in I_h}N_i(x)\Big(\sum_{l\in I_i}u(x_l)\tilde{\xi}_i^l(x)\Big).
\end{equation}
We rewrite (\ref{global1}) by combining the multipliers of each $u(x_l)$ as
\begin{equation}\label{global2}
{\cal V}_hu(x)=\sum_{i\in I_h}\sum_{l\in I_i}u(x_l)N_i(x)\tilde{\xi}_i^l(x)=\sum_{l\in I_h}u(x_l)\psi_l(x),
\end{equation}
where
\[\psi_l(x):=\sum_{i\in L_l}N_i(x)\tilde{\xi}_i^l(x),\,\,\,L_l=\{i: l\in I_i\}.\]
Now, we obtain an approximation ${\cal V}_hu$ (\ref{mainmode}) of $u$. The shape functions $\psi_l$ are obtained by recombining $N_i(x)\tilde{\xi}_i^l$ that are constructed from the basis of GFEM (\ref{GFEMspace}), see (\ref{LSbasis}) for the definition of $\tilde{\xi}_i^l$. Therefore, the approximation space
\begin{equation}\label{CGFEMspace}
{\cal S}_h={\cal S}_{CG}:=\{\psi_l:\,l\in I_h\}
\end{equation}
is a subspace of the preliminary GFEM space (\ref{GFEMspace}), namely,
\begin{equation}\label{contain}
{\cal S}_{CG}\subset {\cal S}_G.
\end{equation}
We refer to the GFEM with the approximation space (\ref{CGFEMspace}) as condensed GFEM (CGFEM). Clearly, the CGFEM has as many DOFs as FE nodes. From point of view of algorithm, the CGFEM needs the local approximation space $V_i$ and $V_i$-unisolvent set $X_i$. $V_i$ is the same as in the GFEM (\ref{GFEMspace}), while $X_i$ is designed according to $V_i$ and problem-dependent, which will be presented in the next section. We will next prove that the CGFEM (\ref{CGFEMspace}) maintains a similar approximation feature to (\ref{FAM}) of the preliminary GFEM (\ref{CGFEMspace}).

\begin{mytheorem}\label{maintheo}
Let $u$ be the solution to the variational problem (\ref{varprob}) and $u_h$ be the solution to the discretized variational problem (\ref{dvarprob}) produced by the CGFEM (\ref{CGFEMspace}). Assume that for any $i\in I_h$, $X_i$ is $V_i$-unisolvent (\ref{Haar}). Then there is constant independent of $h$ such that for arbitrary $\eta_i\in V_i$, $i\in I_h$,
\begin{eqnarray}\label{FAM1}\|u-u_h\|_{{\cal E}(\Omega)}&\leq& C\Bigg(\sum_{i\in I_h} \Big[\|u-\eta_i\|_{L^2(\omega_i)}^2h^{-2}+|u-\eta_i|_{H^1(\omega_i)}^2\Big] \nonumber\\
&&+ \sum_{i\in I_h} \Big[ \sum_{l\in I_i}|u(x_l)-\eta_i(x_l)|^2\Big(\|\tilde{\xi}_i^l\|_{L^2(\omega_i)}^2h^{-2}+|\nabla\tilde{\xi}_i^l|_{L^2(\omega_i)}^2\Big)\Big] \Bigg)^{1/2}.\end{eqnarray}
\end{mytheorem}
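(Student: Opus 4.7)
The plan is to invoke Cea's lemma (\ref{Cea}) to reduce the problem to estimating $\|u - {\cal V}_h u\|_{{\cal E}(\Omega)}$, since by construction ${\cal V}_h u \in {\cal S}_{CG}$. Because the energy norm is equivalent to the $H^1$ semi-norm (up to the constants $C_1, C_2$), it suffices to control $\|u - {\cal V}_h u\|_{L^2}$ and $|u - {\cal V}_h u|_{H^1}$ on each patch $\omega_i$ and sum with the finite-overlap property of $\{\omega_i\}$.

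The central algebraic identity I would exploit is that, using the PU property $\sum_i N_i \equiv 1$ and the reproducing property (\ref{reproducing1}), for any choice of $\eta_i \in V_i$ we can write
\begin{equation*}
u - {\cal V}_h u \;=\; \sum_{i \in I_h} N_i (u - u_i) \;=\; \sum_{i \in I_h} N_i \Bigl[(u - \eta_i) + \sum_{l \in I_i}\bigl(\eta_i(x_l) - u(x_l)\bigr)\tilde{\xi}_i^l\Bigr],
\end{equation*}
because $\eta_i = \sum_{l \in I_i}\eta_i(x_l)\tilde{\xi}_i^l$ by (\ref{reproducing1}) and $u_i = \sum_{l \in I_i}u(x_l)\tilde{\xi}_i^l$ by (\ref{LS2}). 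This is the step that converts the LS-based approximation error into a combination of (a) a ``classical'' GFEM-type residual $u - \eta_i$ and (b) a pointwise error of $\eta_i$ at the LS nodes, weighted by the local basis functions $\tilde{\xi}_i^l$.

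Next I would apply the product-rule bound $|N_i f_i|_{H^1(\omega_i)}^2 \le C\bigl(h^{-2}\|f_i\|_{L^2(\omega_i)}^2 + |f_i|_{H^1(\omega_i)}^2\bigr)$ coming from (\ref{Shacond}), then the discrete Cauchy--Schwarz applied to the sum $\sum_{l \in I_i}$ using the uniform bound (\ref{finiteover}) on $|I_i| \le \kappa$, yielding
\begin{equation*}
\|N_i f_i\|_{L^2(\omega_i)}^2 + h^2|N_i f_i|_{H^1(\omega_i)}^2 \;\le\; C\|u-\eta_i\|_{L^2(\omega_i)}^2 + C h^2|u-\eta_i|_{H^1(\omega_i)}^2 + C\sum_{l\in I_i}|u(x_l)-\eta_i(x_l)|^2\bigl(\|\tilde{\xi}_i^l\|_{L^2(\omega_i)}^2 + h^2|\nabla\tilde{\xi}_i^l|_{L^2(\omega_i)}^2\bigr).
\end{equation*}
Summing over $i$ using the finite-overlap of the patches (so that $\sum_i \|N_i f_i\|_{H^1(\Omega)}^2$ controls $\|\sum_i N_i f_i\|_{H^1(\Omega)}^2$ up to a constant) and dividing through by $h^2$ produces (\ref{FAM1}).

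I do not expect a genuine obstacle: the only care-points are (i) keeping track of the two sources of error in the ``add and subtract $\eta_i$'' step and noticing that the reproducing property (\ref{reproducing1}) is exactly what makes the $\eta_i$-piece cancel, and (ii) handling the inner sum over $l \in I_i$ through Cauchy--Schwarz with the help of the uniform cardinality bound (\ref{finiteover}), which is crucial so that the constant in (\ref{FAM1}) stays independent of $h$. The appearance of the $\tilde{\xi}_i^l$-norms on the right-hand side is unavoidable and explains why, in the applications of Section~4, the regularity/scaling of the LS shape functions $\tilde{\xi}_i^l$ will need to be analyzed separately in order to turn (\ref{FAM1}) into optimal convergence rates.
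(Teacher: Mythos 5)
Your proposal is correct and follows essentially the same route as the paper: reduce to $\|u-{\cal V}_hu\|_{{\cal E}(\Omega)}$ via the best-approximation property (\ref{Cea}), split with the PU and the product-rule bound from (\ref{Shacond}), and use the reproducing property (\ref{reproducing1}) to add and subtract $\eta_i$, leaving the nodal-error terms weighted by the $\tilde{\xi}_i^l$ norms. The only cosmetic difference is that you insert $\eta_i$ before applying the PU splitting and make the Cauchy--Schwarz step with the cardinality bound (\ref{finiteover}) explicit, whereas the paper performs the substitution after the splitting and absorbs that step into the generic constant.
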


\begin{proof}
Since $X_i$ is $V_i$-unisolvent for all $i\in I_h$, the shape functions $\psi_l$ in (\ref{global2}) can be constructed uniquely through the local LS procedure (\ref{LS}). Let ${\cal V}_hu:=\sum_{l\in I_h}u(x_l)\psi_l(x)$ as in (\ref{global2}),
using \eqref{global1}, \eqref{global2}, PU property of $N_i$, (\ref{Shacond}), and \eqref{LS2}, we have
\begin{eqnarray}
\|u-{\cal V}_hu\|_{{\cal E}(\Omega)}^2&=&\|\sum_{i\in I_h}N_i(u-u_i)\|_{{\cal E}(\Omega)}^2=\|\sum_{i\in I_h}N_i\Big(u-\sum_{l\in I_i}u(x_l)\tilde{\xi}_i^l\Big)\|_{{\cal E}(\Omega)}^2\nonumber
\\&\leq& 2\sum_{i\in I_h} \Bigg[ \|u-\sum_{l\in I_i}u(x_l)\tilde{\xi}_i^l\|_{L^2(\omega_i)}^2|N_i|_{H^1(\omega_i)}^2
+|u-\sum_{l\in I_i}u(x_l)\tilde{\xi}_i^l|_{H^1(\omega_i)}^2 \|N_i\|_{L^2(\omega_i)}^2 \Bigg]\nonumber\\
&\leq& C\sum_{i\in I_h} \Bigg[ \|u-\sum_{l\in I_i}u(x_l)\tilde{\xi}_i^l\|_{L^2(\omega_i)}^2h^{-2}
+|u-\sum_{l\in I_i}u(x_l)\tilde{\xi}_i^l|_{H^1(\omega_i)}^2 \Bigg].\label{mainest1}
\end{eqnarray}
According to the reproducing property (\ref{reproducing1}), we have for any $\eta_i\in V_i$,
\[u-\sum_{l\in I_i}u(x_l)\tilde{\xi}_i^l = u-\eta_i - \sum_{l\in I_i}(u(x_l)-\eta_i(x_l))\tilde{\xi}_i^l.\]
Using this in (\ref{mainest1}) yields
\begin{eqnarray}
\|u-{\cal V}_hu\|_{{\cal E}(\Omega)}^2
&\leq& C\sum_{i\in I_h} \Big[\|u-\eta_i\|_{L^2(\omega_i)}^2h^{-2}+|u-\eta_i|_{H^1(\omega_i)}^2\Big] \nonumber\\
&&+ C\sum_{i\in I_h}\Big[  \sum_{l\in I_i} |u(x_l)-\eta_i(x_l)|^2\Big(\|\tilde{\xi}_i^l\|_{L^2(\omega_i)}^2h^{-2}+\|\nabla\tilde{\xi}_i^l\|_{L^2(\omega_i)}^2\Big)\Big] .\label{mainest2}
\end{eqnarray}
Now, the desired result (\ref{FAM1}) is obtained from a fact $\|u-u_h\|_{{\cal E}(\Omega)}\leq \|u-{\cal V}_hu\|_{{\cal E}(\Omega)}$.
\end{proof}

The error estimate (\ref{FAM1}) is the approximation property of CGFEM, which is similar to (\ref{FAM}) of GFEM and SGFEM. A difference consists in that the regularity of local functions $\|\tilde{\xi}_i^l\|_{L^2(\omega_i)}$ and $\|\nabla\tilde{\xi}_i^l\|_{L^2(\omega_i)}$ is involved in (\ref{FAM1}). This implies that estimates on the regularity of $\tilde{\xi}_i^l$ are important for the CGFEM. We assume the regularity as follows:
\begin{equation}
\quad \|\tilde{\xi}_i^l\|_{L^\infty(\Omega)} \le C_1,\quad \|\nabla \tilde{\xi}_i^l\|_{L^\infty(\Omega)} \le C_2 h^{-1},\,\,\,\forall\,\,\,i\in I_h,\,\,l\in I_i, \label{regu}
\end{equation}
where $C_1,\, C_2$ are generic constants independent of $i$ and $h$. We see that the approximation feature (\ref{FAM1}) has the similar behaviour with (\ref{FAM}) if the regularity condition (\ref{regu}) is satisfied. In next section, we will prove the regularity result (\ref{regu}) for the CGFEM when the high order polynomials serve as the enrichments, namely, $V_i={\cal P}_k$. Based on that, the optimal convergence order $O(h^k)$ under energy norm will also be proven.

\begin{remark}
The similar ideas to develop the CGFEM can be found out in intrinsic GFEM (PUM) \cite{FrBelInt,FrBelIntPU,Fries}, PUM-RBF methods \cite{MolFer,Wen1}, extra-DOF-free GFEM \cite{Tian}, and so on. The intrinsic GFEM is constructed using the moving least square procedures, which causes the numerical integration difficulty as in meshless methods \cite{BBOZ}. The extra-DOF-free GFEM employs a selectively interpolating least square approach to set up the local approximation functions, which poses a strong constraint for linear functions and thus leads to accuracy loss when the linear functions serve as the enrichments. Moreover, to our best knowledge, a rigorous theoretical analysis about the approximation features (\ref{FAM1}) has not made in those studies.   \myend
\end{remark}

\begin{remark}
As with other methods based on the least-square schemes, the CGFEM does not satisfy the Kronecker condition, i.e., $\psi_l(x_i)\neq \delta_{li}$. Therefore, imposing essential boundary conditions (EBC) is not straightforward as in the FEM. This occurs to meshless methods typically \cite{BaBanOs,NguRab}. Therefore, the well-developed techniques to impose the EBC in meshless methods, such as Nitsche's method, coupling with the FEM, could also be applied to the CGFEM efficiently. We refer to \cite{HueFer,FerHue,BaBanOs,BaBanOs1,GriSch1,Zhang} for detailed discussions on the EBC. We will address the EBC for CGFEM specifically in a forthcoming study.  \myend
\end{remark}

In the end of this section, we present a one-dimensional (1D) example to illustrate the CGFEM shape functions intuitively. Let $\{x_i\}$ be the 1D FE nodes with $x_{i-1}<x_i$, $N_i$ be the standard FE hat function with respect to $x_i$, and the enriched spaces $V_i = {\cal P}_k$. To satisfy the ${\cal P}_k$-unisolvent condition (\ref{Haar}), $k=1, 2, 3$, let
\[X_i=\left\{\begin{array}{ll}\{x_{i-1}, x_i, x_{i+1}\}, & k=1, 2\\
\{x_{k-2}, x_{i-1}, x_i, x_{i+1}, x_{k+2}\}, & k=3.\end{array}\right. \]
The associated CGFEM shape functions for $k=1, 2, 3$ are displayed in Fig. \ref{shafun}. The standard FE functions of degree $k$ are also drawn in Fig. \ref{shafun} for comparison.

\begin{figure}
\centering
\includegraphics[width=.90\linewidth]{./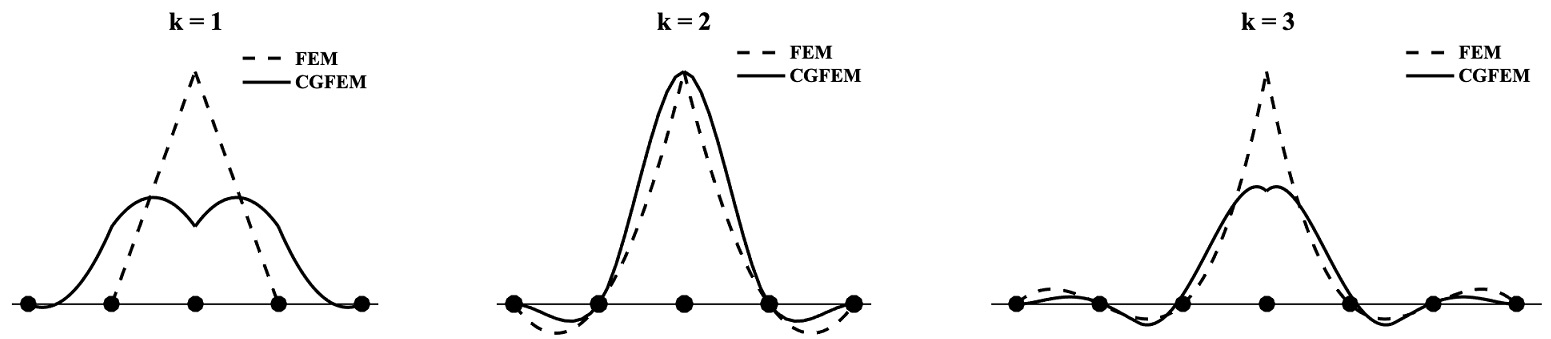}
\caption{An illustration of shape functions in a one-dimensional situation. Solid line: the CGFEM shape functions with the polynomial enrichments ${\cal P}_k$; dash line: FE shape functions of degree $k$. Left: $k=1$, middle: $k=2$, right: $k=3$.
}\label{shafun}
\end{figure}

\section{Applications}\label{sec4}

The CGFEM can be applied to various smooth and non-smooth problems, such as, high order approximations \cite{ZhaBaBan}, crack problems \cite{BelGra,FleChu,LabJul,GuDuBaBan,WenTian,ZhaBaBan1}, interface problems \cite{BabBanKen,Fries,ZhaBaBan2}, problems with multiple voids and inclusions \cite{NguRab}. In this paper, we focus on a high order approximation problem where the high order polynomials serve as enrichments and a Poisson crack problem. For the former, we will prove the regularity (\ref{regu}) of shape functions and the optimal convergence orders $O(h^k)$ under an energy norm. For the crack problem, we will construct an enrichment scheme for the CGFEM and verify its optimal convergence order $O(h)$ numerically in the next section. In both problems, the conditioning of CGFEM will be shown to be of the same order as that of the FEM.

\subsection{Enrichments of high order polynomials}

\begin{figure}
\centering
\includegraphics[width=.90\linewidth]{./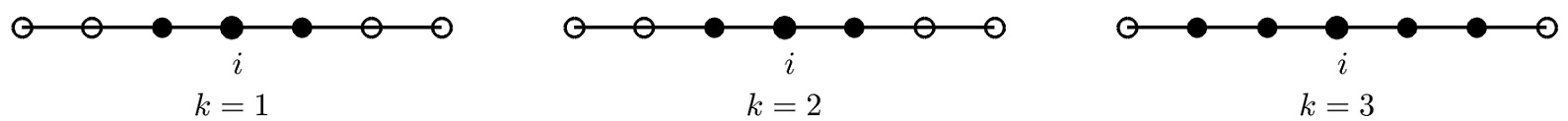}\\\vspace{0.2cm}\includegraphics[width=.90\linewidth]{./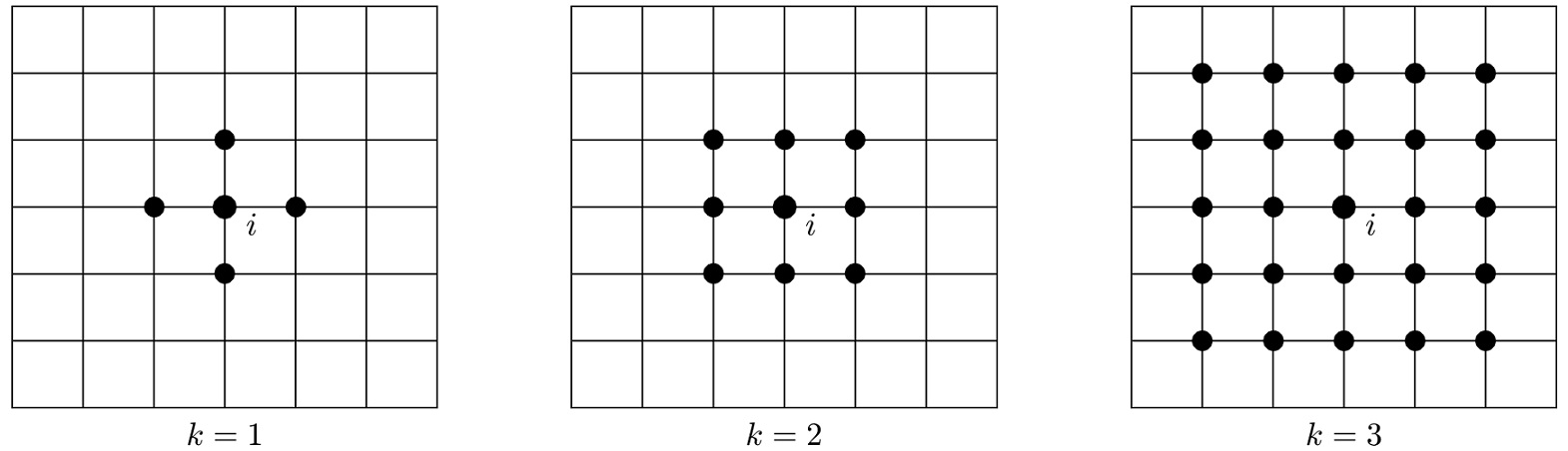}
\caption{An illustration of $X_i$ associated with a node $x_i$, where the big dots are the nodes in $X_i$. Three above: one-dimensional situation; three below: two-dimensional situation; left two: $k=1$; middle two: $k=2$; right two: $k=3$. $V_i={\cal P}_k$, and $X_i$ in the left two, middle two, and right two are $V_i$-unisolvent for $k=1, 2, 3$, respectively.
}\label{shafun}
\end{figure}

Recall that $N_i$ is the standard FE hat function (linear or bilinear) with respect to $x_i$, $\omega_i$ is the patch, and $e_s$'s are the elements. For any $i\in I_h$, let the local enriched space
\[V_i ={\cal P}_k=\mbox{span}\{P_i^\alpha = (\frac{x-x_i}{h})^\alpha\, :\, |\alpha|=0,1,\dots,k\},\]
where $\alpha$ is a multi-index. Denote the dimension of $V_i$ by $\chi$, then clearly,
\begin{equation}
\chi ={{k+d} \choose k}. \label{defk}
\end{equation}
The ${\cal P}_k$-unisolvent set $X_i$ is defined as follows,
\begin{equation}\label{HOPUS}
X_i = \left\{\begin{array}{ll}X_i^1 = \{x_i\}\cup\{x_j: x_j \,\,\mbox{is connected to} \,\,x_i \,\,\mbox{through a side}\}, & k=1,\\
 X_i^2 = \{x_j: x_j\in \omega_i\}, & k=2,\\
 X_i^k = \{x_j: x_j \in e_s \,\,\,\mbox{and}\,\,\, e_s\cap X_i^{k-1}\neq \emptyset\}, & k\geq 3.\end{array}\right.
\end{equation}
$X_i, k=1, 2, 3$ in 1D and 2D are exhibited in Fig. \ref{shafun}.

We first establish the regularity (\ref{regu}) of local basis functions $\tilde{\xi}_i^l\in {\cal P}_k, x_l\in X_i$; $X_i$ is defined in (\ref{HOPUS}). A cone condition of the domain $\Omega$ is presented, which will be used in the proof. A domain $\Omega$ is said to satisfy a cone condition with angle $\vartheta$ and radius $\varrho$ if for each $x \in \Omega$, there exists a ${\bf c}\in {\mathbb R}^d$ with $\|{\bf c}\|_2=1$ such that a cone $C(x,{\bf c},\vartheta,\varrho)\subset\Omega$,
where
\[C(x,{\bf c},\vartheta,\varrho):=B_\varrho(x)\{y\in{\mathbb R}^d: (y-x)\bullet{\bf c}>\|y-x\|_2\cos \theta\}\]
and $B_\varrho(x)$ is a ball of radius $\varrho$ centered at $x$. The definition of cone condition is referred to \cite{Mel}.

\begin{myprop}
Assume that $\Omega$ satisfies the cone condition with angle $\theta$ and radius $r$, and $X_i$ is ${\cal P}_k$-unisolvent. Then the regularity (\ref{regu}) of local LS functions in (\ref{LSbasis}) is satisfied for the CGFEM with the enrichments ${\cal P}_k$, namely,
for any $i\in I_h$ and $l\in I_i$, there is constant $C$ independent of $i$ and $h$ such that
\begin{equation}
\quad \|\tilde{\xi}_i^l\|_{L^\infty(\omega_i)} \le C_1,\quad \|\nabla \tilde{\xi}_i^l\|_{L^\infty(\omega_i)} \le C_2 h^{-1}.\label{reguHO}
\end{equation}
\end{myprop}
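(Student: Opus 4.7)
My plan is to prove the regularity bounds by a scaling argument to a reference configuration, then extract the key estimate from uniform positive-definiteness of the scaled Gram matrix.

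First I would change variables to the reference frame via $\hat{x} = (x-x_i)/h$. Under this map the enrichment basis $P_i^\alpha(x) = \bigl((x-x_i)/h\bigr)^\alpha$ becomes the monomial basis $\hat{Q}^\alpha(\hat{x}) = \hat{x}^\alpha$, independent of $i$ and $h$. The patch $\omega_i$ maps to a reference patch $\hat{\omega}_i \subset B_R(0)$ for some fixed $R$ (because the mesh is quasi-uniform and $X_i$ as defined in (\ref{HOPUS}) only reaches into finitely many neighboring elements). The nodes $x_l\in X_i$ map to $\hat{x}_l = (x_l-x_i)/h \in \hat{X}_i \subset B_R(0)$. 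Writing $\tilde{\xi}_i^l(x) = \hat{Q}^T(\hat{x})\,\hat{G}_i^{-1}\,\hat{Q}(\hat{x}_l)$ with $\hat{G}_i = \sum_{l\in I_i}\hat{Q}(\hat{x}_l)\hat{Q}^T(\hat{x}_l)$, the problem is entirely reduced to estimates on fixed-size polynomials over a bounded reference region; the $h^{-1}$ in the gradient bound will then appear simply from the chain rule $\nabla_x = h^{-1}\nabla_{\hat{x}}$.

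Once this scaling is set up, only two things remain: (a) a uniform upper bound on $\|\hat{Q}(\hat{x})\|_2$ and $\|\nabla_{\hat{x}}\hat{Q}(\hat{x})\|_2$ on $\hat{\omega}_i$, which is immediate because $\hat{\omega}_i$ and $\hat{X}_i$ lie in the fixed ball $B_R(0)$; and (b) a uniform upper bound on $\|\hat{G}_i^{-1}\|_2$, equivalently a uniform lower bound $\lambda_{\min}(\hat{G}_i)\ge c_0>0$ independent of $i$ and $h$. Combining (a) and (b) yields $\|\tilde{\xi}_i^l\|_{L^\infty(\omega_i)} = \|\hat{Q}^T\hat{G}_i^{-1}\hat{Q}(\hat{x}_l)\|_{L^\infty(\hat{\omega}_i)} \le C_1$ and, after the chain rule, $\|\nabla\tilde{\xi}_i^l\|_{L^\infty(\omega_i)} \le C_2 h^{-1}$.

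The main obstacle is the uniform lower bound on $\lambda_{\min}(\hat{G}_i)$: a single unisolvent configuration only gives a strictly positive eigenvalue, but I need that positivity to be bounded away from zero across all patches and all mesh sizes. I would establish this by a compactness plus contradiction argument. Quasi-uniformity of the mesh forces each pair of distinct scaled nodes in $\hat{X}_i$ to be separated by a uniform positive distance, and the cone condition on $\Omega$ guarantees, for patches touching $\partial\Omega$, that the scaled configuration still contains a cone $C(0,\mathbf{c},\vartheta,\varrho/h)$-sampled set that is $\mathcal{P}_k$-unisolvent in a stable way (this is the only place the cone condition enters; it prevents degenerate, nearly-collinear boundary patches). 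Then, if a sequence of configurations $\hat{X}_{i_n}$ produced $\lambda_{\min}(\hat{G}_{i_n})\to 0$, passing to a subsequential limit (the node positions lie in the compact set $\overline{B_R(0)}$ and retain the separation and cone-sampling property) would give a limit configuration $\hat{X}^*$ on which $\sum_{y\in \hat{X}^*} p(y)^2 = 0$ for some nonzero $p\in\mathcal{P}_k$, contradicting unisolvence on the limit. This produces the desired uniform $c_0$.

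Finally, I would collect the bounds: $\|\tilde{\xi}_i^l\|_{L^\infty(\omega_i)} \le \sup_{\hat{\omega}_i}\|\hat{Q}\|_2 \cdot c_0^{-1} \cdot \|\hat{Q}(\hat{x}_l)\|_2 \le C_1$, and
\[
\|\nabla\tilde{\xi}_i^l\|_{L^\infty(\omega_i)} \le h^{-1}\sup_{\hat{\omega}_i}\|\nabla_{\hat{x}}\hat{Q}\|_2 \cdot c_0^{-1} \cdot \|\hat{Q}(\hat{x}_l)\|_2 \le C_2 h^{-1},
\]
which is exactly (\ref{reguHO}). The routine calculations I would not grind through are the explicit bounds on monomials over $B_R(0)$ and the change-of-variable Jacobian; the only nontrivial piece is the compactness argument for $\lambda_{\min}(\hat{G}_i)$, which is the heart of the proof.
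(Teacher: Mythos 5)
Your overall skeleton coincides with the paper's: rescale by $\hat{x}=(x-x_i)/h$, observe that everything reduces to a uniform lower bound on $\lambda_{\min}({\bf G}_i)$ together with trivial bounds on the scaled monomials over the patch, and recover the factor $h^{-1}$ in the gradient bound from the chain rule. The difference, and the problem, is in how you obtain the uniform lower bound, which you yourself identify as the heart of the proof. Your compactness-and-contradiction argument has a genuine gap: unisolvence is an open condition on node configurations, not a closed one, so a limit of $\mathcal{P}_k$-unisolvent configurations need not be $\mathcal{P}_k$-unisolvent. The two properties you claim survive the limit --- uniform pairwise separation of the scaled nodes and containment of a cone --- do not imply unisolvence of the limit configuration (separated nodes can degenerate onto a line, or onto a conic when $k=2$, while keeping both properties), so the step ``contradicting unisolvence on the limit'' is unsupported. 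The phrase that the cone condition guarantees a sampled set that is ``$\mathcal{P}_k$-unisolvent in a stable way'' presupposes exactly the uniform stability you are trying to prove; the hypothesis of the Proposition only gives qualitative unisolvence of each $X_i$, patch by patch. To close your route you would have to characterize the closure of the family of scaled configurations generated by (\ref{HOPUS}) on quasi-uniform meshes and verify unisolvence on that closure, which requires quantitative use of shape regularity and the specific node layout --- none of which appears in your sketch.

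The paper avoids this entirely by a direct quantitative estimate: from ${\bf a}^T{\bf G}_i{\bf a}\ge\max_{l\in I_i}|\pi_i(x_l)|^2$ it invokes a norming-set inequality (Lemma 2.24 of Melenk's lecture notes, which is where the cone condition actually enters) giving $\|\pi_i\|_{L^\infty(B(x_i,C_1h))}\le C_2\max_{l\in I_i}|\pi_i(x_l)|$ with constants independent of $i$ and $h$, and then concludes by equivalence of norms on the finite-dimensional space $\mathcal{P}_k$ over the fixed reference ball $B(O,C_1)$ after the same scaling you use. In other words, the paper imports precisely the quantitative stability that your compactness argument would need to manufacture, and does so constructively rather than by contradiction. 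If you replace your limit argument by such a norming-set estimate (or prove one for the specific sets $X_i^k$ of (\ref{HOPUS}) using shape regularity), the rest of your write-up --- the boundedness of $\|\hat{Q}\|_2$ via $|X_i|\le\kappa$ and the final assembly of (\ref{reguHO}) --- goes through as stated.
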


\begin{proof}
Remembering the definition of $\tilde{\xi}_i^l$ in (\ref{LSbasis}) and (\ref{LS2}), we know that in this situation,
\[\tilde{\xi}_i^l(x) := {\bf Q}_i^T(x){\bf G}_i^{-1}{\bf Q}_i(x_l),\]
where
\[{\bf Q}_i(x)=\Big(P_i^\alpha = (\frac{x-x_i}{h})^\alpha: |\alpha|=0,1,\cdots, k\Big)^T\,\mbox{ and }\,{\bf G}_i=\sum_{l\in I_i}{\bf Q}_i(x_l){\bf Q}_i^T(x_l)\]
It is noted that ${\bf Q}_i(x)$ is a column vector of length $\chi$ (\ref{defk}). First, ${\bf G}_i$ is a positive definite matrix. Indeed, let ${\bf a}=(a_\alpha: |\alpha|=0,1,\cdots, k)^T$ be an arbitrary vector in ${\mathbb R}^\chi$, and
\begin{equation}\label{posi}
{\bf a}^T{\bf G}_i{\bf a}=\sum_{l\in I_i}|\pi_i(x_l)|^2\, \mbox{ and }\, \pi_i(x):=\sum_{|\alpha|=0}^k a_\alpha P_i^\alpha(x).
\end{equation}
Then ${\bf a}^T{\bf G}_i{\bf a}=0$ implies that $\pi_i(x_l)=0$ for $x_l, l\in I_i$. Since $X_i=\{x_l, l\in I_i\}$ is ${\cal P}_k$-unisolvent, and $P_i^\alpha$ are independent, we have $\pi_i(x)\equiv 0$, and thus ${\bf a}=0$. This means that ${\bf G}_i$ is a positive definite matrix.

Next if we prove the minimal eigenvalue $\lambda_{\min}$ of ${\bf G}_i$ has a lower bound that is independent of $h$ and $i$, then we can get (\ref{reguHO}). In fact, for any $x_l\in X_i$, the vector ${\bf G}_i^{-1}{\bf Q}_i(x_l)$ satisfies
\[\|{\bf G}_i^{-1}{\bf Q}_i(x_l)\|_2 \leq \lambda_{\min}^{-1} \|{\bf Q}_i(x_l)\|_2\leq \lambda_{\min}^{-1}C,\]
based on the definition (\ref{HOPUS}) of $X_i$, where $C$ is constant independent of $h$ and $i$. From this and the formulation of $\tilde{\xi}_i^l$, we get (\ref{reguHO}).

Below, we will prove that $\lambda_{\min}$ has a lower bound that is independent of $h$ and $i$. For any vector ${\bf a}=(a_\alpha: |\alpha|=0,1,\cdots, k)^T$, we have from (\ref{posi}) that
\[{\bf a}^T{\bf G}_i{\bf a}\geq \max_{l\in I_i}|\pi_i(x_l)|^2.\]
According to Lemma 2.24 in \cite{Mel} (page 19), there is a constant $C_1$ and $C_2$ independent of $h$ and $i$ such that
\[\|\pi_i\|_{L^\infty(B(x_i,C_1h))}\leq C_2 \max_{l\in I_i}|\pi_i(x_l)|.\]
Then, we get
\begin{equation}\label{posi1}{\bf a}^T{\bf G}_i{\bf a}\geq \frac{1}{C_2^2}\|\pi_i\|_{L^\infty(B(x_i,C_1h))}^2.\end{equation}
Making a scaling argument by $\xi=\frac{x-x_i}{h}$, we have $\tilde{\pi}_i(\xi) = \pi_i(x(\xi))$ is a polynomial in $B(O,C_1)$ for any $i\in I_h$. Using a norm equivalence of polynomial space in $B(O,C_1)$, we get
\[\|\tilde{\pi}_i(\xi)\|_{L^\infty(B(O,C_1))}\geq C \|{\bf a}\|_2.\]
By this and (\ref{posi1}), we get
\[{\bf a}^T{\bf G}_i{\bf a}\geq \frac{1}{C_2^2}\|\pi_i\|_{L^\infty(B(x_i,C_1h))}^2 = \frac{1}{C_2^2}\|\tilde{\pi}_i\|_{L^\infty(B(O,C_1))}^2\geq \frac{C}{C_2^2} \|{\bf a}\|_2,\]
which implies that $\lambda_{\min}$ has a lower bound $\frac{C}{C_2^2}$ that is independent of $h$ and $i$.
\end{proof}

\begin{mytheorem}
Let $u\in H^{k+1}(\Omega)$ be the solution to the variational problem (\ref{varprob}) and $u_h$ be the solution to the discretized variational problem (\ref{dvarprob}) produced by the CGFEM (\ref{CGFEMspace}) with the local space $V_i={\cal P}_k$. Assume that $\Omega$ satisfies the cone condition with angle $\theta$ and radius $r$, and $X_i$ is ${\cal P}_k$-unisolvent. Then there is constant independent of $h$ such that
\begin{eqnarray}\label{errorder}\|u-u_h\|_{{\cal E}(\Omega)}&\leq& Ch^{k}\|u\|_{H^{k+1}(\Omega)}.\end{eqnarray}
\end{mytheorem}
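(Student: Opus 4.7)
The plan is to invoke the fundamental approximation bound \eqref{FAM1} from Theorem~\ref{maintheo} with a concrete choice of $\eta_i\in V_i={\cal P}_k$, then show that each of the two sums is dominated by $Ch^{2k}\|u\|_{H^{k+1}(\Omega)}^2$. Throughout, I would exploit that the enrichments $P_i^\alpha=((x-x_i)/h)^\alpha$ are scaled so all constants are $h$-independent, that $\{\omega_i\}_{i\in I_h}$ has finite overlap by quasi-uniformity, and that $|I_i|\leq\kappa$ by \eqref{finiteover}.

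For the first sum I would choose $\eta_i$ to be the averaged Taylor polynomial of $u$ of degree $k$ on the patch $\omega_i$, following the Dupont--Scott / Bramble--Hilbert theory. Mapping $\omega_i$ to a fixed reference patch via the affine transformation $\xi=(x-x_i)/h$ and rescaling back, the standard polynomial approximation estimates give
\begin{equation*}
\|u-\eta_i\|_{L^2(\omega_i)}\leq Ch^{k+1}|u|_{H^{k+1}(\omega_i)},\qquad |u-\eta_i|_{H^1(\omega_i)}\leq Ch^{k}|u|_{H^{k+1}(\omega_i)},
\end{equation*}
so each summand of the first sum in \eqref{FAM1} is bounded by $Ch^{2k}|u|_{H^{k+1}(\omega_i)}^2$. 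Finite overlap of the patches then yields $Ch^{2k}\|u\|_{H^{k+1}(\Omega)}^2$.

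For the second sum I would combine the regularity estimate \eqref{reguHO} proved in the preceding Proposition with the patch measure bound $|\omega_i|\leq Ch^d$ to obtain
\begin{equation*}
\|\tilde{\xi}_i^l\|_{L^2(\omega_i)}^2 h^{-2}+\|\nabla\tilde{\xi}_i^l\|_{L^2(\omega_i)}^2 \leq Ch^{d-2}.
\end{equation*}
The pointwise errors $|u(x_l)-\eta_i(x_l)|$, $x_l\in X_i\subset\omega_i$, are then controlled by a scaled Sobolev embedding: transplanting to the reference patch, $H^{k+1}\hookrightarrow C^0$ when $k+1>d/2$ (which covers the relevant cases $d=1,2,3$), and combining with the Bramble--Hilbert kernel property of the averaged Taylor polynomial gives
\begin{equation*}
|u(x_l)-\eta_i(x_l)|^2\leq Ch^{\,2(k+1)-d}|u|_{H^{k+1}(\omega_i)}^2.
\end{equation*}
Multiplying by $h^{d-2}$ and summing over $l\in I_i$ (bounded cardinality) and over $i$ (finite overlap) again yields $Ch^{2k}\|u\|_{H^{k+1}(\Omega)}^2$.

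Adding the two bounds and taking a square root in \eqref{FAM1} delivers the estimate \eqref{errorder}. The main delicate point is the pointwise control of $u-\eta_i$ needed for the second sum, because \eqref{FAM1} is distinguished from the classical GFEM bound \eqref{FAM} by the presence of node-value terms. This is handled cleanly by the reference-patch scaling together with the Sobolev embedding in the admissible dimension range, which is why the cone condition on $\Omega$ is assumed; the condition guarantees uniform embedding constants across patches so that the argument is independent of $i$ and $h$.
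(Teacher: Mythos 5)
Your proposal is correct and follows essentially the same route as the paper: take $\eta_i$ to be the (averaged) Taylor polynomial of degree $k$, bound the first sum in \eqref{FAM1} with the standard local $L^2$/$H^1$ estimates, and control the node-value terms by the pointwise bound $\|u-\eta_i\|_{L^\infty(\omega_i)}\leq Ch^{k+\frac{2-d}{2}}\|u\|_{H^{k+1}(\omega_i)}$ together with the regularity \eqref{reguHO} and $|\omega_i|\leq Ch^{d}$. The only cosmetic differences are that the paper cites this $L^\infty$ estimate directly from the standard FE literature rather than deriving it via scaled Sobolev embedding, and that (a shared, harmless imprecision) for $k\geq 3$ the set $X_i$ can slightly exceed $\omega_i$, so the pointwise estimate should formally be stated on a marginally enlarged patch of diameter $O(h)$.
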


\begin{proof}
Let $\eta_i\in V_i$ be the Taylor polynomial $T_i^ku(x)$ of degree $k$ of $u$ at point $x_i$ in (\ref{FAM1}). Using a standard error estimate (\cite{Cia})
\[\|u-\eta_i\|_{L^2(\omega_i)}\leq Ch^{k+1}\|u\|_{H^{k+1}(\omega_i)},\,|u-\eta_i|_{H^1(\omega_i)}\leq Ch^k \|u\|_{H^{k+1}(\omega_i)},\,\|u-\eta_i\|_{L^\infty(\omega_i)}\leq Ch^{k+\frac{2-d}{2}}\|u\|_{H^{k+1}(\omega_i)}\]
and the regularity result (\ref{reguHO}) in (\ref{FAM1}), we get the desired result (\ref{errorder}).

\end{proof}

\subsection{A Poisson crack problem}
Let $\Omega$ be a bounded cracked domain in ${\mathbb R}^2$ with the boundary $\partial\overline{\Omega}$, and $\vec{n}$ be
the unit outward normal vector to $\partial\overline{\Omega}$. A straight crack in $\Omega$ is denoted by $\Gamma_O$ with a crack tip $O$, as shown in Fig. \ref{CPdomain} left. We consider a Poisson model problem:
\begin{equation}\label{craequ}
-\triangle u = f \,\,\mbox{in}\,\,\,\Omega,\,\,\,\,\frac{\partial u}{\partial \vec{n}} =g\,\,\mbox{on}\,\,\partial\overline{\Omega},
\end{equation}
with a traction free condition
\[\frac{\partial u}{\partial \vec{n}_O} = 0 \mbox{ on } \Gamma_O,\]
where $\vec{n}_O$ is unit vector normal to $\Gamma_O$. The solution $u$ to (\ref{craequ}) can be decomposed \cite{ZhaBaBan1} into
\begin{equation}\label{exasol}
u = \sum_{i=1}^\infty \chi_i r^{\frac{2i-1}{2}}\sin(\frac{2i-1}{2}\theta) + u_0
\end{equation}
where $(r,\theta)$ is the polar coordinate with the crack tip $O$ serving as the pole and the opposite direction of crack line as  polar line, and $\chi_i$ are the constants, and $u_0$ is a smooth on the closure $\overline{\Omega}$ of cracked domain $\Omega$ (noting that $\overline{\Omega}$ is not cracked).

\begin{figure}
\begin{center}
\includegraphics[scale=.30]{./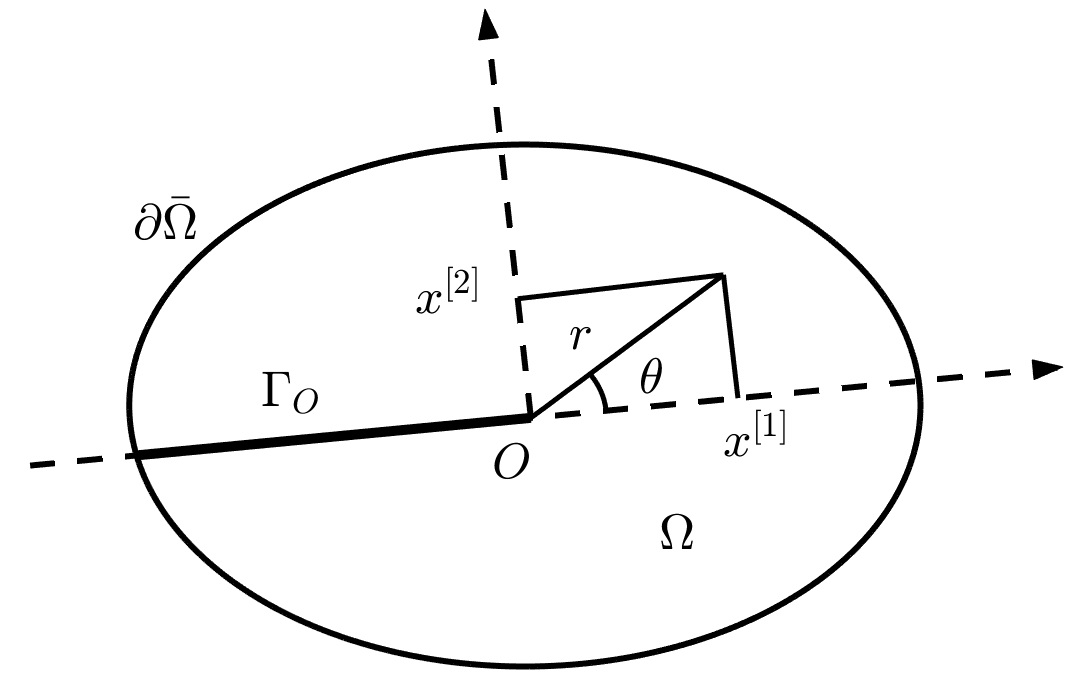} \includegraphics[scale=.30]{./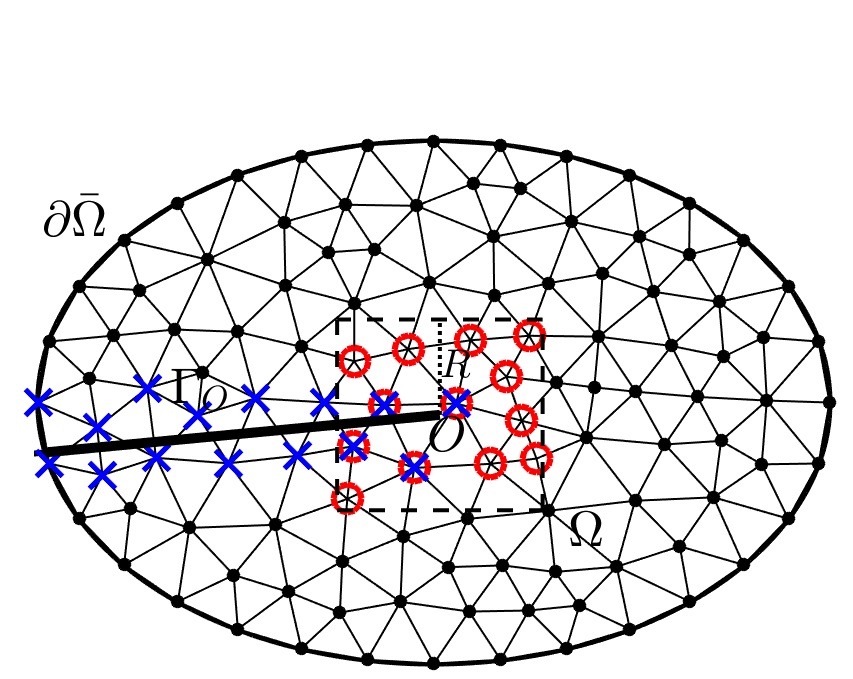}
\caption{{\footnotesize Left: the domain $\Omega$ with a crack line $\Gamma_O$ and a tip $O$, where $x=(x^{[1]}, x^{[2]})$ and $(r,\theta)$ are the Cartesian and polar coordinate systems with the origin $O$, respectively.
Right: the enrichment scheme of CGFEM: the nodes in $I_h^1$ and $I_h^2$ are marked by ``$\times$'' and ``$\circ$'', respectively, the nodes in $I_h^2\setminus I_h^1$ are marked by ``$\circ$'' without ``$\times$'',
and the nodes in $I_h\setminus(I_h^1\cup I_h^2)$ are marked by ``$\bullet$'' (not ``$\circ$'' or ``$\times$'').}}  \label{CPdomain}
\end{center}
\end{figure}

Still, $N_i$ is the standard FE hat function (linear or bilinear) with respect to $x_i$, $\omega_i$ is the patch, and $e_s$'s are the elements. To describe the enrichment scheme for the crack problem (\ref{craequ}), we define
\begin{equation}
I_h^1 = \{i\in I_h:\,x_i\in e_s \,\,\mbox{and}\,\,e_s\cap \Gamma_O\neq\emptyset\}\,\,\,\mbox{and}\,\,\,I_h^2 = \{i\in I_h:\,x_i\in D(O,R)\} \label{Ih12}
\end{equation}
where $D(O,R)$ is a square centered at $O$ with a side length $2R$; $R$ is a constant independent of $h$ and $i$. See Fig. \ref{CPdomain} right for an exhibition of $I_h^1$ and $I_h^2$. We only consider approximation of degree 1 in this problem. Denote $S_{\frac{1}{2}}=\sqrt{r}\sin\frac{\theta}{2}$, and the local enrichment space is defined as follows,
\begin{equation}\label{CPenr}
V_i = \left\{\begin{array}{ll}\mbox{span}\{1, \frac{x^{[1]}-x_i^{[1]}}{h}, \frac{x^{[2]}-x_i^{[2]}}{h}\}, & i\in I_h\setminus(I_h^1\cup I_h^2),\\
 \mbox{span}\{1, \frac{x^{[1]}-x_i^{[1]}}{h}, \frac{x^{[2]}-x_i^{[2]}}{h}, S_{\frac{1}{2}} \}, & i\in I_h^2\setminus I_h^1,\\
 \mbox{span}\{1, \frac{x^{[1]}-x_i^{[1]}}{h}, \frac{x^{[2]}-x_i^{[2]}}{h}, S_{\frac{1}{2}}, S_{\frac{1}{2}}\frac{\langle x-x_i, \vec{n}_O^\perp\rangle}{h} \}, & i\in I_h^1,\end{array}\right.
\end{equation}
where $x:=(x^{[1]}, x^{[2]})$ is the Cartesian coordinates in ${\mathbb R}^2$, and $\langle x,y\rangle :=x^{[1]}y^{[1]}+ x^{[2]}y^{[2]}$. The theoretical analysis on approximation properties of $S_{\frac{1}{2}}$ and $S_{\frac{1}{2}}\frac{\langle x-x_i, \vec{n}_O^\perp\rangle}{h}$ is referred to \cite{ZhaBaBan1}. $V_i$-unisolvent set $X_i$ for (\ref{CPenr}) is defined by
\begin{equation}\label{CPUS}X_i = \{x_j:, x_j\in\omega_i\}.\end{equation}

In the next section, we will numerically verify that the CGFEM with $V_i$ and $X_i$ for the crack problem (\ref{craequ}) yields the optimal convergence order $O(h)$ under energy norm, and moreover, its scaled condition numbers are of same order as those of the standard FEM.

\begin{remark}
If the regularity (\ref{regu}) of the local enriched functions is proven, we can prove the optimal convergence $O(h)$ under energy norm using arguments in \cite{ZhaBaBan1} (Theorem 5.2). The proof of (\ref{regu}) depends on not only the singular factor $S_{\frac{1}{2}}$ but also relative positions between the patch $\omega_i$ and crack line $\Gamma_O$. It will be very technical, and we will not carry out it in this paper. The optimal convergence order $O(h)$ will be justified numerically in the next section. To highlight the main idea, we consider a Poisson crack problems, and an extension to the elasticity crack problems will be investigated similarly in a forthcoming study.  \myend
\end{remark}

\section{Numerical Experiments}                                                                                                                                                                                                                                  The numerical experiments are executed to test effectiveness of CGFEM and verify the theoretical results. The comparisons of CGFEM with the existing GFEM and SGFEM are made by computing their relative energy error
\[
EE:=\frac{\|u-u_h\|_{{\cal E}(\Omega)}}{\|u\|_{{\cal E}(\Omega)}}
\]
and scaled condition numbers (SCN) of stiffness matrices, where $u_h$ is an approximate solution derived from the above-mentioned FEM, GFEM, SGFEM, or CGFEM. The SCN of a matrix $A$ is defined as the ratio of largest and smallest eigenvalues of ${\bf D}^{-\frac{1}{2}}{\bf A}{\bf D}^{-\frac{1}{2}}$,
where ${\bf D}$ is a diagonal matrix composed of diagonal entries of ${\bf A}$.

\subsection{High order polynomial approximations for 2D smooth problems}
We first test the high order approximations for 2D smooth problems. The following methods will be compared:
\begin{itemize}
  \item FEM: the standard bilinear FEM of degree $k$, where the shape functions are constructed from an reference element and isoparametric mappings between the reference and real elements, see Fig. \ref{meshes}.
  \item f.t.GFEM: the conventional GFEM (\ref{GFEMspaceFT}) with the flat-top PU (\ref{PU}) and $V_i={\cal P}_k$,
  \item SGFEM: the SGFEM (\ref{SGFEMspace}) with the flat-top PU (\ref{PU}) and $V_i={\cal P}_k$,
  \item CGFEM: the CGFEM (\ref{CGFEMspace}) with the piecewise linear (or bilinear) PU $N_i$ and $V_i={\cal P}_k$.
\end{itemize}
We note that the tested FEM (the first item above) is the high order FEM, and its shape functions are different from the FE functions $N_i$ in (\ref{Shacond}), the standard FE hat functions (linear or bilinear) serving as the PU functions for the CGFEM (\ref{CGFEMspace}). It was shown in \cite{HLi,ZhaBaBan} that the SCNs of f.t.GFEM (\ref{GFEMspaceFT}) and SGFEM (\ref{SGFEMspace}) are of same order as those of the FEM. We will see below that in addition to the optimal approximations, the SCNs of proposed CGFEM (\ref{CGFEMspace}) are also of same order as those of the FEM.

\medskip

Let $\Omega = (0,1)\times(0,1)$, $\Gamma=\partial\Omega$, and the model problem we consider is
\begin{equation}\label{EStrongModelin2d}
- \triangle u = f \,\,\, \mbox{in} \,\,\Omega\,\,\,\mbox{and}\,\,\,\frac{\partial u}{\partial n}(x)=g(x)\,\,\,\mbox{on}\,\, \Gamma.
\end{equation}
Assume $u(x)=e^{2x^{[1]}+x^{[2]}}$ is an exact solution, where $x=(x^{[1]},x^{[2]})$ is the Cartesian coordinate in ${\mathbb R}^2$, and $f(x)$ and $g(x)$ are calculated through equation (\ref{EStrongModelin2d}) using $u$.

Let $h=\frac{1}{N}$ be the mesh-size parameter; $N$ is a positive integer. We consider two kinds of meshes on $[0, 1]\times [0,1]$ as follows: \begin{itemize}
\item uniform mesh (UM): the elements $e_{ij}=[\frac{i}{N}, \frac{i+1}{N}]\times[\frac{j}{N}, \frac{j+1}{N}], i, j=0,1,2,\cdots,N-1$, and the nodes $x_{ij}=(\frac{i}{N}, \frac{j}{N})$, $i, j=0,1,2,\cdots,N$, see Fig. \ref{meshes} left; \item perturbed mesh (PM): the uniform mesh above is perturbed, and the nodes $x_{ij} = (\frac{i}{N}, \frac{j}{N}) + 0.1h\epsilon_{ij}, \,\,i, j=1,\cdots,N-1$, ${x}_{0j}=(0, \frac{j}{N})$, ${x}_{Nj}=(1, \frac{j}{N})$, ${x}_{i0}=(\frac{i}{N}, 0)$, ${x}_{iN}=(\frac{i}{N}, 1)$, where $\epsilon_{ij}$ is a random number produced from a uniform distribution on $[-0.5,0.5]\times[0,5,0.5]$, see Fig. \ref{meshes} right.
\end{itemize}
Let $N_{ij}$ and $Q_{ij}^\sigma, i,j=0,1,\cdots,N$ be the FE hat functions (bilinear) and the flat-top PU functions with a parameter $\sigma$, respectively, associated with $x_{ij}$. A construction of $Q_{ij}^\sigma$ is given
in (\ref{PU}) in Appendix, and we use $l = 1$ and $\sigma = 0.2$ in the PU functions (\ref{PU}) for the tests below. For any $i, j$, the enrichment space for the GFEM, SGFEM, and CGFEEM is set by
\[V_{ij}={\cal P}_k=\mbox{span}\{\big(\frac{x-x_{ij}}{h}\big)^\alpha:\,|\alpha|=0,1,\cdots,k\},\]
and $V_{ij}$-unisolvent set $X_{ij}$, used to construct the CGFEM, is defined in (\ref{HOPUS}) and is exhibited in Fig. \ref{shafun}.

\begin{figure}
\centering
\includegraphics[width=.40\linewidth]{./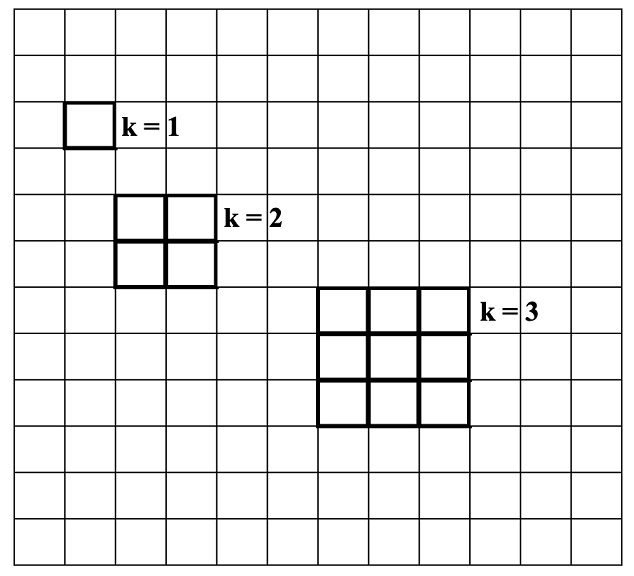}\hspace{0.5cm}\includegraphics[width=.40\linewidth]{./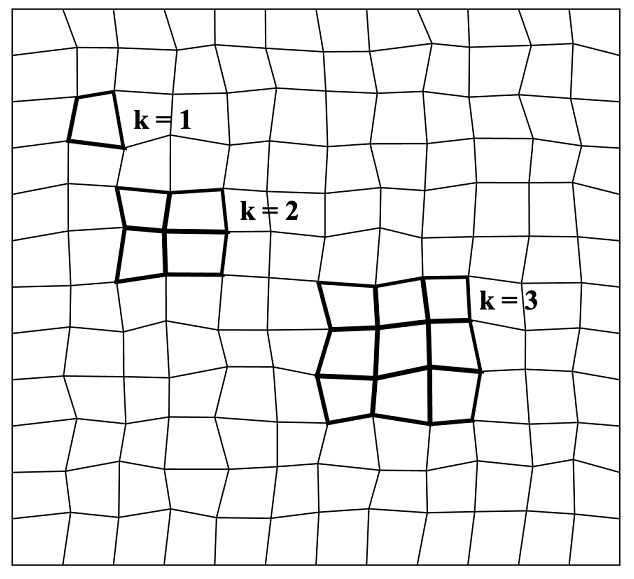}
\caption{An illustration of two-dimensional meshes with $N=12$. Left: uniform mesh, right: perturbed mesh.
}\label{meshes}
\end{figure}

\medskip

\begin{figure}
\centering
\includegraphics[width=.33\linewidth]{./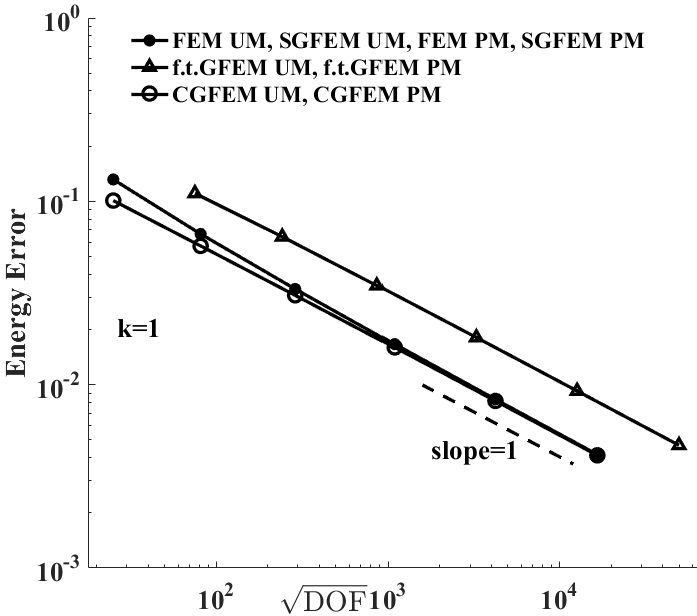}\includegraphics[width=.33\linewidth]{./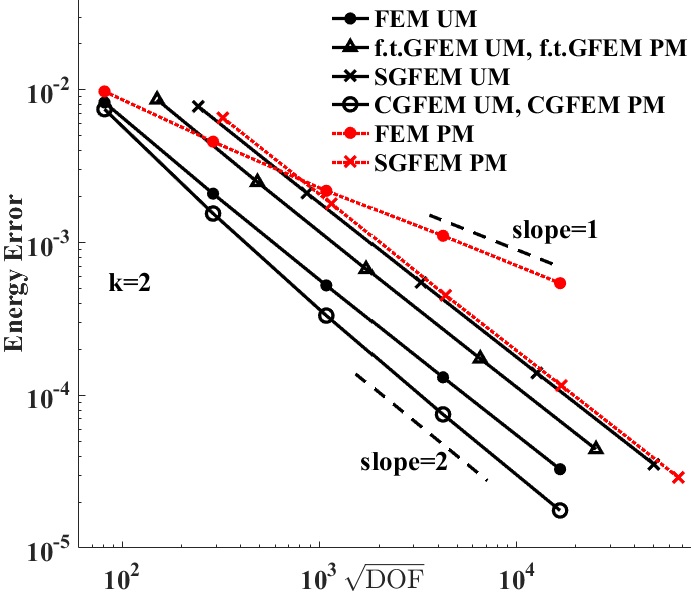}\includegraphics[width=.33\linewidth]{./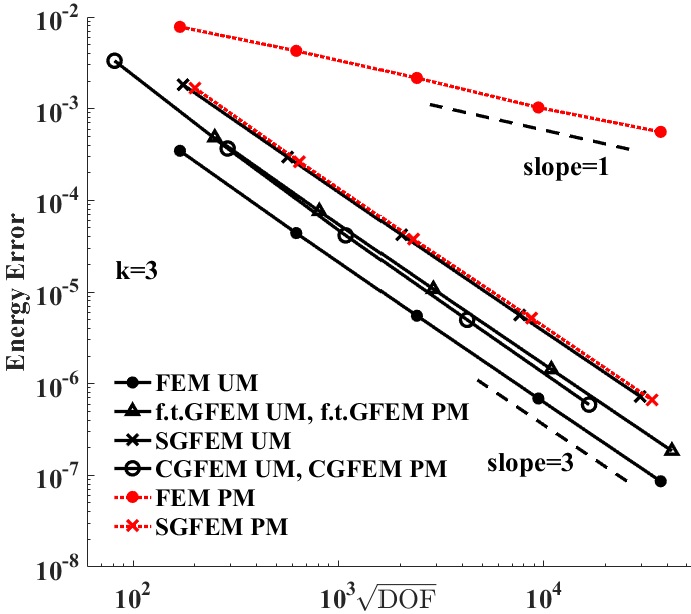}
\caption{ The relative errors EE with respect to $\sqrt{\mbox{DOF}}$ for two-dimensional uniform mesh (UM) and perturbed mesh (PM), left: k=1, middle: k=2, right: k=3.
}\label{EE2d}
\end{figure}

\begin{figure}
\centering
\includegraphics[width=.33\linewidth]{./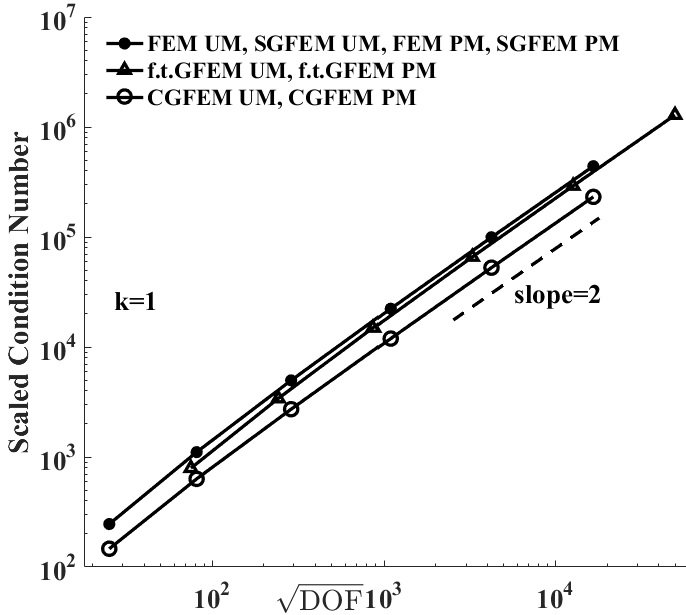}\includegraphics[width=.33\linewidth]{./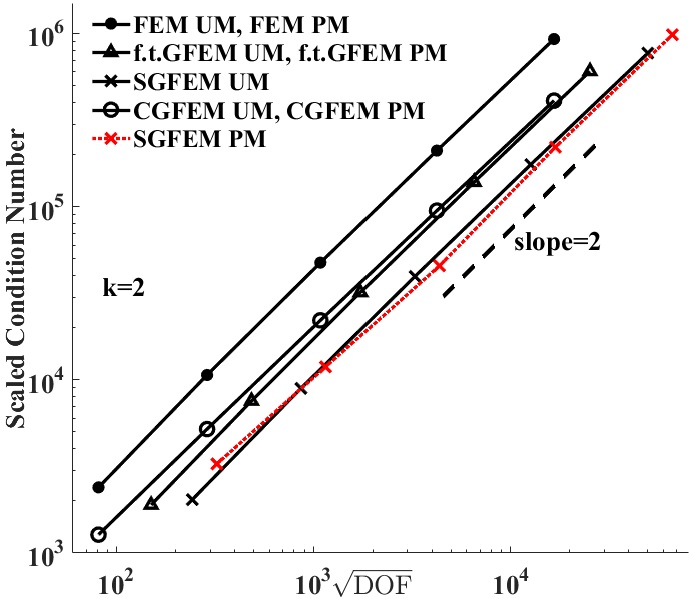}\includegraphics[width=.33\linewidth]{./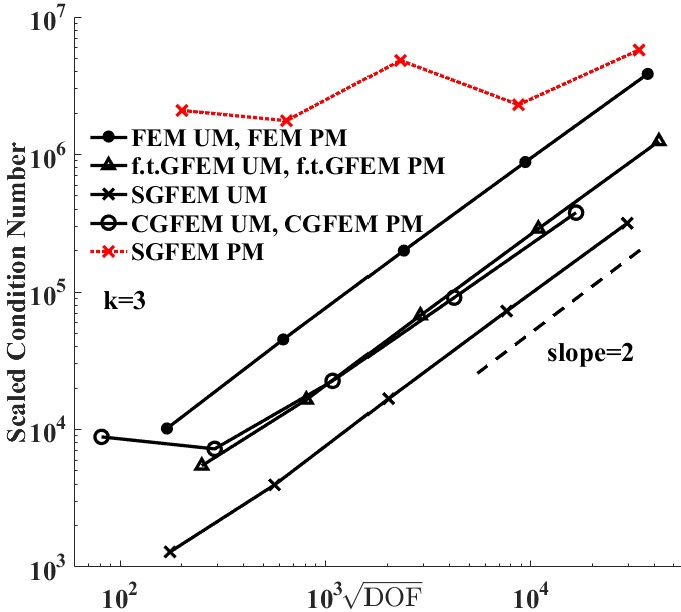}
\caption{ The SCN with respect to $\sqrt{\mbox{DOF}}$ for two-dimensional uniform mesh (UM) and perturbed mesh (PM), left: k=1, middle: k=2, right: k=3.
}\label{SCN2d}
\end{figure}

The EEs and SCNs of FEM, f.t.GFEM, SGFEM, and CGFEM, associated with the $\sqrt{\mbox{DOF}}$, are presented in Fig. \ref{EE2d} and Fig. \ref{SCN2d}, respectively. For the uniform mesh, all four methods yield the optimal convergence order $O(h^k)$ for $k=1,2,3$, where the EEs of CGFEM are smaller than those of the other three methods for $k=1,2$, while the EEs of FEM are minimal for $k=3$. For the perturbed mesh, the EEs of f.t.GFEM, SGFEM, and CGFEM are optimal with the orders $O(h^k)$ for $k=1,2,3$, where the EEs of CGFEM are minimal for all $k=1, 2, 3$. For the perturbed mesh, it is noteworthy that the EEs of FEM are $O(h)$ for all $k=1,2,3$, which are not optimal for $k=2,3$. This coincide with observations in \cite{FrBelIntPU}. It means that the high order FEMs may suffer from mesh distortions. These results show that the CGFEM converges with the optimal order $O(h^k)$ for $k=1,2,3$,
and the associated EEs are smaller than those of the other GFEMs. The EEs of CGFEM also smaller than those of the FEM except in the situation of uniform mesh and $k=3$. Moreover, the CGFEM is robust with the mesh distortions, compared with the FEM.

For the conditioning, it is shown in Fig. \ref{SCN2d} that the four methods are all well conditioned, and the SCNs of CGFEM, f.t.GFEM, and SGFEM increase with an order about $O(h^{-2})$ that is the same as that of standard FEM.
The SCN order of SGFEM for the perturbed mesh and $k=3$ (in Fig. \ref{SCN2d} right) can be shown for larger DOFs.

It is concluded from this set of numerical experiments that (1) the CGFEM converges with the optimal convergence order $O(h^k)$ for $k=1,2,3$; (2) the EEs of CGFEM are minimal compared with the other GFEMs, and are even smaller than those of FEM in most situations; (3) the CGFEM is stable in a sense that its SCNs are of same order as those of FEM; (4) the CGFEM is robust with the mesh distortions, compared with the FEM.

\subsection{2D crack problem} \label{sec6.1}
We next consider a Poisson crack problem. Let $\Omega=[-1,1]^2\setminus [-1,0]\times[0,0]$ be a cracked domain with the crack line $\Gamma_O$ ($x^{[2]}=0, -1\leq x^{[1]}\leq 0$) and the tip $O$ (0,0), as shown in Fig. \ref{cradomain}. We consider the model problem (\ref{craequ}) on $\Omega$ with a manufactured solution
\begin{equation}
u = r^{\frac{1}{2}} \sin (\frac{1}{2}\theta) + r^{\frac{3}{2}} \sin (\frac{3}{2}\theta), \quad -\pi \le \theta \le \pi , \label{MS}
\end{equation}
where $(r,\theta)$ is the polar coordinate with the crack tip $O$ serving as the pole and the opposite direction of crack line as the polar line, i.e., $x^{[2]}=0, x^{[1]}\geq 0$. $f$ and $g$ in (\ref{craequ}) are calculated through the equation (\ref{craequ}) using the exact solution $u$ (\ref{MS}).

We use uniform $n\times n$ meshes on $\overline{\Omega}$ with $n$ odd such that $\Gamma_O$ does not align with boundaries of elements, see Fig. \ref{cradomain} for a display of $\Gamma_O$ and the elements with $n=17$. The mesh nodes are denoted by $\{x_i, i\in I_h\}$, where $I_h$ is the index set, and the standard bi-linear FEM hat-functions $\{N_i, i\in I_h\}$ and patches $\{\omega_i, i\in I_h\}$ are employed to construct the CGFEM and GFEM.

We next present the CGFEM. According to the definition of $I_h^1$ and $I_h^2$ in (\ref{Ih12}), we know in Fig. \ref{cradomain} that the nodes in $I_h^1$ and $I_h^2$ are marked by ``$\times$'' and ``$\circ$'', respectively. The nodes in $I_h^2$ are located in a square $D(O,R)$, as shown in Fig. \ref{cradomain}, and we take $R=\frac{1}{4}$ in the tests. The local enrichment spaces $V_i$ and $V_i$-unisolvent set $X_i$, used to construct the CGFEM, are defined in (\ref{CPenr}) and (\ref{CPUS}), respectively, see Fig. \ref{cradomain}.

For comparison, we also test the standard FEM with its approximation subspace
\[\mbox{span}\{N_i, i\in I_h\}\]
and a conventional GFEM with its approximation subspace as follows:
\begin{equation}
\mbox{span}\{N_i:\,i\in I_h\} + \mbox{span}\{N_iH:\,i\in I_h^1\setminus \Delta\} + \mbox{span}\{N_i S^{\frac{1}{2}}:\,i\in I_h^2\}.\label{CPGFEM}
\end{equation}
The indices in $I_h^1$ and $I_h^2$ in (\ref{CPGFEM}) are the same as above, $S^{\frac{1}{2}}$ is defined in (\ref{CPenr}), $H$ is a Heaviside function defined by
\[H(x)=\left\{\begin{array}{ll}1, & x^{[2]}\geq 0,\\-1,&x^{[2]}<0,\end{array}\right.\]
and $\Delta=\{i\in I_h, x_i\in e_s\,\,\mbox{and}\,\,\,O\in e_s\}$. It is clear that $\Delta$ is the index set of nodes of element that contains the crack tip $O$. The GFEM (\ref{CPGFEM}) is called the conventional GFEM with a geometric enrichment in a sense that the nodes in a fixed domain are enriched by the singular function $S^{\frac{1}{2}}$. The GFEM (\ref{CPGFEM}) yields the optimal convergence order $O(h)$ in the energy norm. We refer to \cite{FrBel,BelGra,LabJul,ZhaBaBan1} for the details of GFEM (\ref{CPGFEM}).

\begin{figure}
\begin{center}
\includegraphics[scale=.35]{./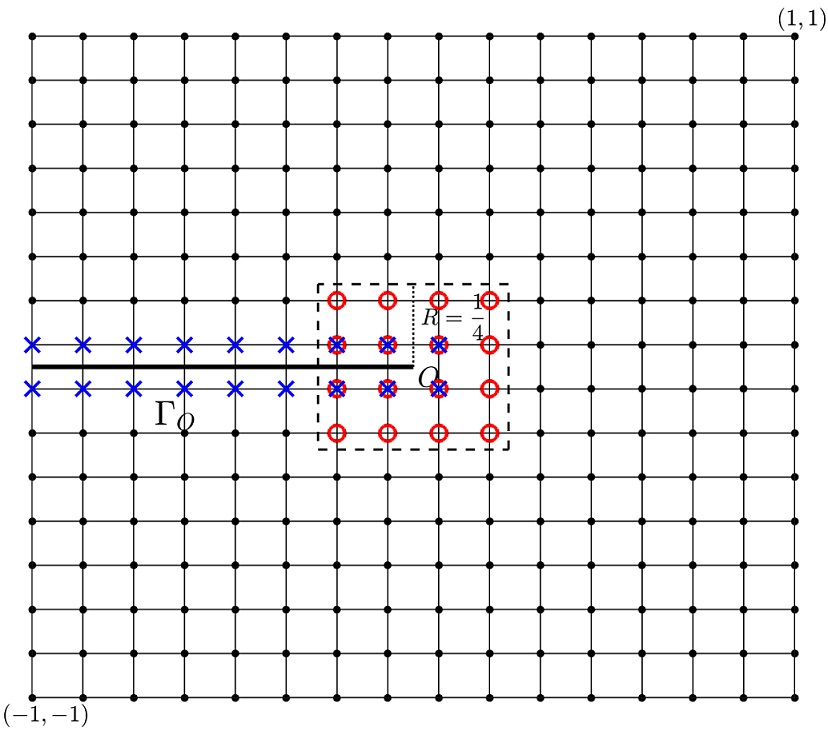}
\caption{{\footnotesize The domain $\Omega$ with a crack line $\Gamma_O$ and a tip $O$. The mesh lines are not aligned with the crack line. For the GFEM, the nodes in $I_h^1$ (``$\times$'') are enriched by $H$, and the nodes in $I_h^2$ (``$\circ$'') are enriched by $S_{\frac{1}{2}}$. In the CGFEM, for the nodes in $I_h^1$ (``$\times$''), the local space $V_i=\mbox{span}\{1, \frac{x^{[1]}-x_i^{[1]}}{h}, \frac{x^{[2]}-x_i^{[2]}}{h},
S_{\frac{1}{2}}, S_{\frac{1}{2}}\frac{x^{[1]}-x_i^{[1]}}{h} \}$; for the nodes in $I_h^2\setminus I_h^1$ (``$\circ$'' without ``$\times$''), $V_h=\mbox{span}\{1, \frac{x^{[1]}-x_i^{[1]}}{h},
\frac{x^{[2]}-x_i^{[2]}}{h}, S_{\frac{1}{2}} \}$; for the other nodes (not ``$\circ$'' or ``$\times$''), $V_i = \mbox{span}\{1, \frac{x^{[1]}-x_i^{[1]}}{h}, \frac{x^{[2]}-x_i^{[2]}}{h}\}$. }}  \label{cradomain}
\end{center}
\end{figure}
The EEs and SCNs of the FEM, GFEM, and CGFEM mentioned above, against the $\sqrt{\mbox{DOF}}$, are presented with $h=\frac{1}{2^{j+1}+1}, j=1,2,\cdots,7$. in Fig. \ref{EESCNcrack}. We observe in Fig. \ref{EESCNcrack} left that both the GFEM and CGFEM yield the optimal order of convergence, i.e., $O(h)$, and the EEs of CGFEM are smaller than those of GFEM. As predicted, the FEM does not produce any convergence order since the solution $u$ is discontinuous cross $\Gamma_O$, and the standard FE functions $N_i$ are continuous. The Fig. \ref{EESCNcrack} right clearly shows that the CGFEM is well conditioned, and its SCNs grow as $O(h^{-2})$, which is same as that of the FEM. On the other hand, the growth of SCNs in the GFEM, approaches $O(h^{-4})$ as $h$ becomes smaller, as observed in \cite{BeMoBu}. Thus the GFEM is not stable.

This set of numerical results demonstrates that the CGFEM is both of optimal convergence and well conditioned for the Poisson crack problems. The extension of CGFEM to the elasticity crack problem will be studied in a future research.
\begin{figure}
\begin{center}
\includegraphics[scale=.50]{./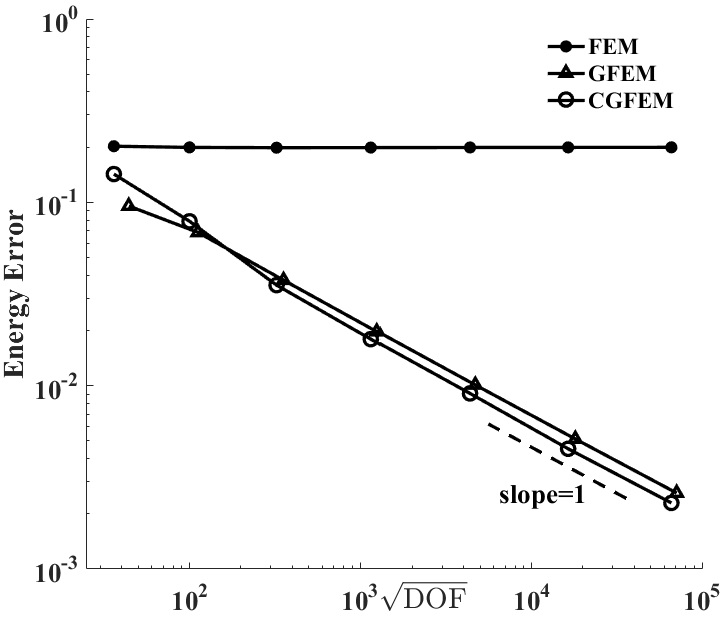}\,\,\,\includegraphics[scale=.50]{./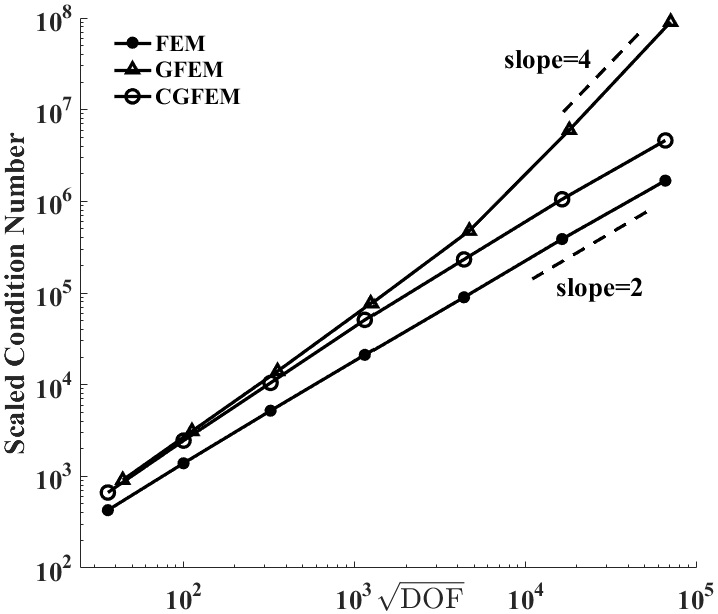}
\caption{{\footnotesize The energy errors (EE) and scaled condition numbers (SCN) of FEM, GFEM and CGFEM for the crack problem. Left: EE; right: SCN.}}  \label{EESCNcrack}
\end{center}
\end{figure}

\section{Concluding Remarks}

We proposed a CGFEM in this paper, and the main idea is to derive a subspace of the approximation space of the conventional GFEM using a local least square procedure and PU techniques. The CGFEM maintains the major advantages of GFEM, such as the approximation properties, simple mesh, and overcome the two main difficulties in GFEM, the conditioning and extra enriched DOFs. Specifically, the number of DOFs of CGFEM is the same as that of FE nodes, and the conditioning is of same order as that of the FEM. The fundamental approximation properties were proven, which are almost same as those of the GFEM. The CGFEM was applied to attain the high order polynomial approximations, and the optimal convergence order $O(h^k)$ under energy norm was proven. The high order CGFEM yields the minimal errors in comparison with the current GFEMs in literatures, such as flat-top GFEM and SGFEM. These errors are even smaller than those of FEM in most situations. Moreover, the CGFEM is robust with mesh distortions, compared with the FEM. We also applied to the CGFEM to a Poisson problem, whose advantages over the conventional GFEM was shown. The extensions of CGFEM to the interface problems, elasticity crack problems, and time-dependent problems will investigated in future studies.

\appendix
\section{Appendix}

We will make comparisons of the proposed method with the conventional GFEM and SGFEM above that employ the FT-PU. To this end, we present an element-wise approach to construct the PU functions $Q_i^\sigma$. For a parameter $0\le \sigma < 0.5$, and a positive integer $l$, we define the FU functions on a 1D reference element $[-1,1]$ as follows:
\begin{eqnarray}\label{PU1D}
Q_{L}^\sigma(\xi)&=&\left\{\begin{array}{ll}1&\xi\in [-1, -1+2\sigma],\\
\left(1-(\frac{\xi+1-2\sigma}{2(1-2\sigma)})^l\right)^l&\xi\in [-1+2\sigma, -1+2(1-\sigma)],\\
0&\xi\in [-1+2(1-\sigma),1],\end{array}\right.\,\,\,\,Q_{R}^\sigma(\xi) = 1- Q_{L}^\sigma(\xi).
\end{eqnarray}
Based on $Q_{L}^\sigma(\xi)$ and $Q_{L}^\sigma(\xi)$, we obtain the PU functions on a 2D reference element $e=[-1,1]^2$:
\begin{equation}\label{PU2D}Q_{L}^\sigma(\xi)\times Q_{L}^\sigma(\eta),\,\,Q_{R}^\sigma(\xi)\times Q_{L}^\sigma(\eta),\,\,Q_{L}^\sigma(\xi)\times Q_{R}^\sigma(\eta),\,\,Q_{R}^\sigma(\xi)\times Q_{R}^\sigma(\eta),\,\,(\xi, \eta)\in [-1,1]^2.
\end{equation}
The FT-PU functions on $e$ are denoted by $Q_J^e$, which are defined in (\ref{PU2D}). Let $e_s$ be any real element 2D, and $F_s$ be the associated affine mapping from the reference element $e$ to $e_s$. Then the FT-PU functions on $e_s$ are derived by
\begin{equation}\label{PU}
Q_J^s(x) = Q_J^e(F_s^{-1}(x)),\,\,\,x\in e_s.
\end{equation}
Assembling these element PU functions $Q_{J}^s$ according to the nodes yields the desired PU functions $Q_i^\sigma, i\in I_h$. It is clear that such PU functions $Q_i^\sigma$ satisfy the condition (\ref{secondPUcond}) since the mesh is assumed to be quasi-uniform. This construction approach can be found out in \cite{ZhaBaBan} in detail.

\small


\begin{thebibliography}{}
\addtolength{\itemsep}{-0.5ex}

\bibitem{AgaBor}
K. Agathos, S. P. A. Bordas, and E. Chatzi, Improving the conditioning of XFEM/GFEM for fracture mechanics problems through enrichment quasi-orthogonalization,
{\it Comput. Methods Appl. Mech. Engrg.}, {\bf 346}: 1051-1073, 2019.

\bibitem{AgaCha}
K. Agathos, E. Chatzi, and S. P. A. Bordas, A unified enrichment approach addressing blending and conditioning issues in enriched finite elements,
{\it Comput. Methods Appl. Mech. Engrg.}, {\bf 349}: 673-700, 2019.

\bibitem{BaBan1}
I. Babu\v{s}ka and U. Banerjee, Stable generalized finite element method(SGFEM), {\it Comput. Methods Appl. Mech.
Engrg.}, {\bf 201-204}: 91-111, 2011.

\bibitem{BabBanKen}
I. Babu\v{s}ka, U. Banerjee, and K. Kergrene, Strongly Stable Generalized Finite Element Method: application to interface problems, {\it Comput. Methods Appl. Mech. Engrg.}, {\bf 327}: 58-92, 2017.

\bibitem{BaBanOs}
I. Babu\v{s}ka, U. Banerjee, and J. Osborn, Survey of meshless and generalized finite element methods: a unified approach.
{\it Acta Numerica}, {\bf 12}: 1-125, 2003.

\bibitem{BaBanOs1}
I. Babu\v{s}ka, U. Banerjee and J. Osborn, Generalized finite element methods ¡ª mail ideas, results and perspective, {\it International Journal of Computational Methods}, {\bf 1}: 67-103, 2004.

\bibitem{BBOZ}
I. Babu\v{s}ka, U. Banerjee, J. Osborn, and Q. Zhang, Effect of numerical integration on meshless methods, {\it Comput. Methods Appl. Mech. Engrg.}, {\bf 198}: 2886-2897, 2009.

\bibitem{BaMe}
I. Babu\v{s}ka and J. M. Melenk, The partition of unity finite element method, {\it Int. J. Numer. Meth.
Engng.}, {\bf 40}: 727-758, 1997.

\bibitem{BeMoBu}
E. B\'{e}chet, H. Minnebo, N. Mo\"{e}s, and B. Burgardt, Improved implementation and robustness study
of the X-FEM method for stress analysis around cracks, {\it Int. J. Numer. Meth. Engng.}, {\bf 64}: 1033-1056,
2005.

\bibitem{BelGra}
T. Belytschko, R. Gracie, G. Ventura, A review of extended/generalized finite element methods for material modeling, {\it Modelling Simul. Mater. Sci. Eng.}, {\bf 17}: 43-74, 2009.

\bibitem{Cia}
P. G. Ciarlet, {\it The Finite Element Method for Elliptic Problems}, North-Holland, Amsterdam, 1978.

\bibitem{Efe}
Y. Efendiev, T. Y. Hou, {\it Multiscale Finite Element Methods: theory and applications}, Springer, 2009.

\bibitem{FerHue}
S. Fernandez-Mendez and A. Huerta, Imposing essential boundary conditions in mesh-free methods, {\it Comput. Methods Appl. Mech. Engrg.}, {\bf 193}: 1257--1275, 2004.

\bibitem{FleChu}
M. Fleming, Y.A. Chu, B. Moran, and T. Belytschko, Enriched element-free Galerkin methods for crack tip fields, {\it Int. J. Numer. Methods Engrg.}, {\bf40}: 1483-1504, 1997.

\bibitem{Fries}
T. P. Fries, The intrinsic XFEM for two-fluid flows, {\it Int. J. Numer. Meth. Fluids}, {\bf 60}: 437-471, 2009.

\bibitem{FrBelInt}
T. P. Fries and  T. Belytschko, The intrinsic XFEM: a method for arbitrary discontinuities without additional unknowns, {\it Int. J. Numer. Meth. Engng.}, {\bf 68}: 1358-1385, 2006.

\bibitem{FrBelIntPU}
T. P. Fries and  T. Belytschko, The intrinsic partition of unity method, {\it Comput. Mech.}, {\bf 40}: 803-814, 2007.

\bibitem{FrBel}
T. P. Fries and T. Belytschko, The extended/generalized finite element method: An overview of the method and its applications, {\it Int. J. Numer. Meth. Engng.}, {\bf 84}: 253-304, 2010.

\bibitem{FrZil}
T. P. Fries and A.Zilian, On time integration in the XFEM, {\it Int. J. Numer. Meth. Engng.}, {\bf 79}: 69-93, 2009.

\bibitem{GriSch}
M. Griebel and M. A. Schweitzer, A particle-partition of unity method. II. efficient cover construction and reliable integration, {\it SIAM J. Sci. Comput.}, {\bf 23}: 1655--1682, 2002.

\bibitem{GriSch1}
M. Griebel and M. A. Schweitzer, A particle-partition of unity method. V. Boundary conditions, {\it Geometric analysis and nonlinear partial differential equations}, 519-542, Springer, Berlin, 2003.

\bibitem{GuDuBaBan}
V. Gupta, C. A. Duarte, I Babu\v{s}ka, and U. Banerjee, A Stable and optimally convergent generalized FEM (SGFEM) for linear elastic
fracture mechanics, {\it Comput. Methods Appl. Mech. Engrg.}, {\bf 266}: 23-39, 2013.

\bibitem{HueFer} A. Huerta and S. Fernandez-Mendez. Enrichment and coupling of the finite element and meshless methods, {\it Int. J. Numer. Meth. Engng.}, {\bf 48}: 1615-1636, 2000.

\bibitem{LabJul}
P. Laborde, J. Pommier, Y. Renard, and M. Sala¨¹n, High-order extended finite element method for cracked domains, {\it Int. J. Numer. Meth. Engng.}, {\bf 64}: 354-381, 2005.

\bibitem{LanMak}
C. Lang, D. Makhija, A. Doostan, and K. Maute, A simple and efficient preconditioning scheme for heaviside enriched XFEM,
{\it Comput. Mech.}, {\bf 54}: 1357-1374, 2014.

\bibitem{HLi}
H. Li, A note on the conditioning of a class of generalized finite element methods, {\it Applied Numerical Mathematics}, {\bf 62}: 754-766, 2012.

\bibitem{Mel}
J. M. Melenk, On approximation in meshless methods, In J. Blowey and A. Craig, editors, {\it Frontiers in Numerical Analysis, Durham 2004}, Springer Verlag, 2005.

\bibitem{MelBa1}
J. M. Melenk and I. Babu\v{s}ka, The partition of unity finite element method: Theory and application,
{\it Comput. Methods Appl. Mech. Engrg.}, {\bf 139}: 289-314, 1996.

\bibitem{MenBor}
A. Menk and S. P. A. Bordas, A robust preconditioning technique for the extended finite element method, {\it Int. J. Numer. Meth. Engng.}, {\bf 85}(13): 1609-1632, 2011.

\bibitem{MenRet}
T. Menouillard, J. R\'{e}thor\'{e}, N. Mo\"{e}s, A. Combescure, and H. Bung, Mass lumping strategies for X-FEM explicit dynamics: Application to crack propagation, {\it Int. J. Numer. Meth. Engng.}, {\bf 74}: 447¨C474, 2008.

\bibitem{MolFer}
R. Mollapourasl, A. Fereshtian, H. Li, and X. Lu, RBF-PU method for pricing options under the jump diffusion model with local volatility, {\it Journal of Computational and Applied Mathematics}, {\bf 337}: 98-118, 2018.

\bibitem{NguRab}
V. P. Nguyen, T. Rabczuk, S. Bordas, and M. Duflot, Meshless methods: A review and computer implementation aspects, {\it Mathematics and Computers in Simulation}, {\bf 79}: 763-813, 2008.

\bibitem{NicRenCha}
S. Nicaise, Y. Renard, and E. Chahine, Optimal convergence analysis for the extended finite element method, {\it Int. J. Numer. Meth. Engng.}, {\bf 86}: 528-548, 2011.

 \bibitem{SanDu}
A. G. Sanchez-Rivadeneira and C. A. Duarte, A stable generalized/eXtended FEM with discontinuous interpolants for fracture mechanics, {\it Comput. Methods Appl. Mech. Engrg.}, {\bf 345}: 876-918, 2019.

\bibitem{Sch}
M. A. Schweitzer, Stable enrichment and local preconditioning in the particle-partition of unity method, {\it Numerische Mathematik}, {\bf 118}: 137-170, 2011.

\bibitem{Tian}
R. Tian, Extra-dof-free and linearly independent enrichments in GFEM, {\it Comput. Methods Appl. Mech. Engrg.}, {\bf 266}: 1-22, 2013.

\bibitem{WenTian}
L. Wen and R. Tian, Improved XFEM: Accurate and robust dynamic crack growth simulation, {\it Comput. Methods Appl. Mech. Engrg.}, {\bf 308}: 256-285, 2016.

\bibitem{Wen1}
H. Wendland, {\it Scattered Data Approximation}, Cambridge University Press, New York, 2005.

\bibitem{Zhang0}
Q. Zhang, DOF-gathering stable generalized finite element methods (SGFEM) for crack problems, accepted in {\it Numerical Methods for Partial Differential Equations}, 2020

\bibitem{Zhang}
Q. Zhang, Quadrature for meshless Nitsches methods, {\it Numerical Methods for Partial Differential Equations}, {\bf 30}: 265-288, 2014.

\bibitem{ZhaBaBan1}
Q. Zhang, I. Babu\v{s}ka and U. Banerjee, Robustness in stable generalized finite element methods (SGFEM) applied to Poisson problems with crack singularities, {\it Comput. Methods Appl. Mech. Engrg.}, {\bf 311}: 476-502, 2016.

\bibitem{ZhaBaBan}
Q. Zhang, U. Banerjee, and I. Babuska, High order stable generalized finite element methods, {\it Numerische Mathematik}, {\bf 128}: 1-29, 2014.

\bibitem{ZhaBaBan2}
Q. Zhang, U. Banerjee, and I. Babu\v{s}ka, Strongly Stable Generalized Finite Element Method (SSGFEM) for a non-smooth interface problem, {\it Comput. Methods Appl. Mech.
Engrg.}, {\bf 344}: 538-568, 2019.

\end{thebibliography}
\end{document}